\theoremstyle{plain}
\newtheorem{thm}{Theorem}[section]
\newtheorem{prop}[thm]{Proposition}
\theoremstyle{definition}
\newtheorem{defn}[thm]{Definition}
\newtheorem{rem}[thm]{Remark}
\theoremstyle{plain}
\DeclareMathOperator{\Id}{Id}
\newcommand{\free}{\mathfrak{F}}
\newcommand{\Lin}{\mathcal{L}}
\newcommand{\lip}{_{\mathrm{Lip}}}
\newcommand{\Lip}{{\mathrm{Lip}}_0}
\newcommand{\LipA}{\mathrm{Lip}_{\mathcal{A}}}
\title[The Lipschitz bounded approximation property for operator ideals]
{The Lipschitz bounded approximation property for operator ideals}
\author[G.~Choi]{Geunsu Choi}
\address[G.~Choi]{Department of Mathematics Education, Dongguk University, Seoul 04620, Republic of Korea}
\email{\texttt{chlrmstn90@gmail.com}}
\author[M.~Jung]{Mingu Jung}
\address[M.~Jung]{{Basic Science Research Institute and Department of Mathematics, POSTECH, Pohang 790-784, Republic of Korea} \newline
\href{http://orcid.org/0000-0000-0000-0000}{ORCID: \texttt{0000-0003-2240-2855} }}
\email{\texttt{jmingoo@postech.ac.kr}}
\thanks{The first author was supported by Basic Science Research Program through the National Research Foundation of
Korea(NRF) funded by the Ministry of Education, Science and Technology [NRF-2020R1A2C1A01010377]. The second author was supported by NRF (NRF-2019R1A2C1003857) and by POSTECH Basic Science Research Institute Grant, whose NRF grant number is 2021R1A6A1A10042944} 
\keywords{Lipschitz map, operator ideal, approximation property}
\subjclass[2010]{46B28, 46B45, 47L20}
\begin{document}

\begin{abstract}

In this article, we introduce the Lipschitz bounded approximation property for operator ideals. With this notion, we extend the original work of Godefroy and Kalton and give some partial answers on equivalence between the bounded approximation property and the Lipschitz bounded approximation property based on an arbitrary operator ideal. Furthermore, we investigate the three space problem for the preceding bounded approximation properties.

\end{abstract}

\maketitle

\section{Introduction}


%

In 2003, Godefroy and Kalton \cite{GK} gave characterizations of the bounded approximation property of a Banach space $X$ in terms of Lipschitz maps on $X$ with finite-dimensional range. More precisely, they proved that a Banach space $X$ has the $\lambda$-bounded approximation property if and only if its Lipschitz-free space over $X$ has the $\lambda$-bounded approximation property if and only if $X$ has the Lipschitz $\lambda$-bounded approximation property, i.e., 
\begin{equation}\label{eq:LipAP}
\Id_X \in \overline{\{ f \in \Lip (X,X) : \dim f(X) < \infty, \|f\|_{\textup{Lip}} \leq \lambda \}}^{\tau_c},
\end{equation}
where $\tau_c$ is the compact-open topology and 
$(\Lip (X,X), \|\cdot\|_{\textup{Lip}})$ is the Banach space of all Lipschitz maps $f : X \to X$ with $f(0)=0$ 
equipped with the Lipschitz norm $\|\cdot\|\lip$ given by 
\[
\| f \|\lip = \sup \left\{ \frac{f(x)-f(y)}{\|x-y\|} : x, y \in X, x\neq y \right\}. 
\] 
Motivated by this fact, we investigate whether the same type of equivalent statements can be obtained when the bounded approximation property on $X$ is replaced by the $\mathcal{A}$-bounded approximation property for an arbitrary operator ideal $\mathcal{A}$ introduced by Oja in \cite{O}. Recall that in the case when $\mathcal{A}$ is the ideal of finite rank operators, the $\mathcal{A}$-bounded approximation property is the classical bounded approximation property.

On the other hand, Godefroy and Saphar \cite{GS} proved the three space problem for the bounded approximation property of a pair $(X,M)$ where $M$ is locally complemented in $X$. Afterwards, Choi and Kim \cite{CK} obtained an analogue for the bounded compact approximation property under some additional conditions. Our aim is to extend these concepts in the settings of arbitrary operator ideals and Lipschitz operator ideals.


To continue with, it requires some basic knowledge about the Lipschitz-free space over a Banach space and about Lipschitz operator ideals, which will be provided in Section \ref{section:preliminaries}. In Section \ref{section:Lip-AP}, we mainly prove many equivalences among those bounded approximation properties naturally-defined with respect to operator ideals, partially extending previous results given by Godefroy and Kalton \cite{GK}. Moreover, we prove further results about inheritance of the approximation properties from a Lipschitz-free space to its own space provided some additional requirements. It will be provided some supplementary equivalence results concerning the approximation properties of $\textup{Lip}_0(X)$ which can be seen as a continuation of the work of Oja \cite{O}. Finally, we present partial answers to the three space problem for the Lipschitz bounded approximation properties in Section \ref{section:TSP}. 
Throughout the paper, we use the standard notation in the Banach space theory. 

\section{Preliminaries}\label{section:preliminaries} 
We assume $X$ and $Y$ always to be real Banach spaces.  
Let $\delta_x$ be the evaluation functional on $\Lip (X) := \textup{Lip}_0 (X,\mathbb{R}) $ with $\delta_x(f)=f(x)$.
The \textit{Lipschitz-free space} over $X$ is defined by
$$\mathfrak{F} (X):=\overline{\text{span}}\{\delta_x\}_{x \in X} \subseteq \Lip (X)^*.$$
Notice that $\delta_X : X \rightarrow \mathfrak{F} (X)$ given by $\delta_X(x)=\delta_x$ is a non-linear isometry (see, for instance, \cite[Proposition 2.1]{GK}). 
It is well-known that the Lipschitz-free space $\mathfrak{F} (X)$ has the following universal property: For any Banach space $Y$ and any $f \in \Lip (X, Y)$, there exists a unique $L_f \in \mathcal L(\mathfrak{F} (X), Y)$ 
such that the following diagram commutes: 
\[
\begin{tikzpicture}
  \matrix (m)
    [
      matrix of math nodes,
      row sep    = 3em,
      column sep = 4em
    ]
    {
      X              & Y\\
      \mathfrak{F}(X) &             \\
    };
  \path
    (m-1-1) edge [->] node [left] {$\delta_X$} (m-2-1)
    (m-1-1.east |- m-1-2)
      edge [->] node [above] {$f$} (m-1-2)
    (m-2-1.east) edge [->]
    node [below] {$\quad L_f$} (m-1-2);
\end{tikzpicture}
\]
and $\|f\|_{\textup{Lip}}=\|L_f\|$ \cite{GK,W}. Moreover, the mapping $f \mapsto L_f$ is an isometric isomorphism from $\Lip (X, Y)$ onto $\mathcal{L} (\mathfrak{F} (X), Y)$. In particular, $\Lip (X)$ is isometrically isomorphic to $\free (X)^*$ and this identification will be used in the sequel without explanation. 
It is also shown in \cite[Lemma 2.2]{GK} that for $f \in \Lip (X, Y)$, there exists a unique $\widehat{f} \in \mathcal L(\mathfrak{F} (X), \mathfrak{F} (Y))$ with $\| \widehat{f} \| = \| f \|\lip$ such that the following diagram commutes: 
\[
\begin{tikzpicture}
  \matrix (m)
    [
      matrix of math nodes,
      row sep    = 3em,
      column sep = 4em
    ]
    {
      X              & Y\\
      \mathfrak{F}(X) &         \mathfrak{F}(Y)    \\
    };
  \path
    (m-1-1) edge [->] node [left] {$\delta_X$} (m-2-1)
    (m-1-1.east) edge [->] node [above] {$f$} (m-1-2)
      (m-1-2) edge [->] node [right] {$\delta_Y$} (m-2-2)
    (m-2-1.east) edge [->]
    node [below] {$\widehat{f}$} (m-2-2);
\end{tikzpicture}
\]
Recall that the map $\beta_X : \mathfrak{F} (X) \rightarrow X$ given by $x^* (\beta_X (\mu)) = \langle x^*, \mu \rangle$ for all $x^* \in X^* \subseteq \Lip (X)$, is a linear quotient map and a left inverse of $\delta_X$. One can easily check that $L_f = \beta_Y \widehat{f}$ for every $f \in \Lip (X, Y)$. 
For more detailed account on Lipschitz-free spaces, we refer to \cite{G, Kalton, Pestov, W}.

Recall the definition of Lipschitz operator ideals. 
\begin{defn}\label{defn:Lip-ideal}
We say that a set $I$ in the space of Lipschitz maps is a \emph{Lipschitz operator ideal} if for every Banach spaces $X$ and $Y$, we have the following:
\begin{enumerate}
\setlength\itemsep{0.4em}
\item[\textup{(i)}] $I(X,Y)$ is a linear subspace of $\textup{Lip}(X,Y)$.
\item[\textup{(ii)}] $h(\cdot)y \in I(X,Y)$ for every $h \in \textup{Lip}(X, \mathbb{R})$ and $y \in Y$.
\item[\textup{(iii)}] For every Banach spaces $W$ and $Z$, $Rfg \in I(W,Z)$ whenever $g \in \textup{Lip}_0(W,X)$, 
$f \in I(X,Y)$ and $R \in \mathcal L(Y,Z)$.
\end{enumerate}
\end{defn}
This definition was introduced by Achour, Rueda, S\'anchez-P\'erez and Yahi \cite{ARSY} and independently by Cabrera-Padilla, Ch\'avez-Dom\'inguez, Jim\'enez-Vargas and Villegas-Vallecillos \cite{CCJV}.  

For a (linear) operator ideal $\mathcal A$, we will consider the composition Lipschitz operator ideal 
$$
\textup{Lip}_{\mathcal A}(X,Y):= \{ f \in \textup{Lip}_0 (X,Y) \colon L_f \in  \mathcal A (\mathfrak{F} (X), Y) \}.
$$
For  $f \in \Lip (X, Y)$, let 
$$
\text{slope} (f) := \left\{ \frac{f(x_1)-f(x_2)}{\|x_1-x_2\|} \colon x_1, x_2 \in X, x_1 \neq x_2 \right\}
\subseteq Y.
$$
It is observed in \cite[Propositions 2.1, 2.2 and 2.4]{JSV} that 
\begin{enumerate}
\setlength\itemsep{0.4em}
\item 
$\textup{Lip}_{\mathcal F}(X,Y)= \{ f \in \textup{Lip}(X,Y) \colon \dim f(X) < \infty \}$,
\item 
$\textup{Lip}_{\mathcal{K}}(X,Y)= \{ f \in \textup{Lip}(X,Y) \colon 
\text{slope} (f) \text{ is relatively compact} \}$,
\item
$\textup{Lip}_{\mathcal{W}}(X,Y) =\{ f \in \textup{Lip}(X,Y) \colon 
\text{slope} (f) \text{ is relatively weakly compact} \}$,
\end{enumerate} 
where $\mathcal{F}, \mathcal{K}$ and $\mathcal W$ denote the operator ideal of all finite rank operators, compact operators and all weakly compact operators, respectively.

For an operator ideal $\mathcal{A}$, Banach spaces $X, Y$ and $T \in \mathcal{A} (X, Y)$, it is clear that 
$$L_T \delta_X=T =T \beta_X \delta_X;$$
hence $L_T = T \beta_X$ on $\mathfrak{F} (X)$ by linearity and continuity of $T$. This implies that $\mathcal{A} \subseteq \LipA$. 




Recall that a Banach space $X$ is said to have the \textit{approximation property} (AP) if for every compact subset $K$ of $X$ and every $\varepsilon > 0$, there exists a finite rank operator $S$ on $X$ such that  $\sup_{x \in K} \|Sx-x\|\leq \varepsilon$, that is, the identity $\Id_X$ of $X$ belongs to $\overline{ \mathcal{F} (X, X) }^{\tau_c}$, where $\tau_c$ denotes the compact-open topology on $X$. For $\lambda \geq 1$, we say that $X$ has the $\lambda$-\textit{bounded approximation property} ($\lambda$-BAP) if $\Id_X$ belongs to $\overline{ \{ T \in \mathcal{F} (X, X) : \|T \| \leq \lambda \}}^{\tau_c}$. We simply say that $X$ has the \textit{bounded approximation property} (BAP) if it has the $\lambda$-BAP for some $\lambda \geq 1$. We refer the reader to \cite{CASAZZA, LT} for a detailed account on the approximation properties.

In what follows, we introduce the definitions of the variants of the bounded approximation properties in which we are mainly interested.  
For simplicity, given $\lambda \geq 1$ and an operator ideal $\mathcal{A}$, let $\mathcal A^{\lambda}(X, Y):=\{T \in \mathcal A(X,Y) : \|T\|\leq \lambda \}$ and $\textup{Lip}_{\mathcal A}^{\lambda}(X,Y):=\{f \in \textup{Lip}_{\mathcal A}(X,Y) : \|f\|_{\textup{Lip}} \leq \lambda \}.$ By the \emph{conjugate operator} $T^*: Y^* \to X^*$ corresponding to $\mathcal{A}$ we mean an operator $T^* \in \mathcal{L}(Y^*,X^*)$ such that $T \in \mathcal{A}(X,Y)$, and we write $\mathcal{A}_c^\lambda(Y^*,X^*) := \{T^* \in \mathcal{L}(Y^*,X^*) : T \in \mathcal{A}^\lambda(X,Y)\}$. 
Following \cite{O}, a Banach space $X$ has the \emph{$\mathcal A$-$\lambda$-bounded approximation property} ($\mathcal{A}$-$\lambda$-BAP) if $\Id_X \in \overline{\mathcal A^{\lambda}(X,X)}^{\tau_c}$. We simply say that $X$ has the \emph{$\mathcal{A}$-BAP} if it has the $\mathcal{A}$-$\lambda$-BAP for some $\lambda \geq 1$. Also, $X^*$ is said to have the \emph{$\mathcal{A}$-$\lambda$-BAP with conjugate operators} if $\Id_{X^*} \in \overline{\mathcal A_c^{\lambda}(X^*,X^*)}^{\tau_c}$, or simply the \emph{$\mathcal{A}$-BAP with conjugate operators} if it holds for some $\lambda \geq 1$.


\begin{defn}
Let $\mathcal A$ be an operator ideal and $\lambda \geq 1$. 
A Banach space $X$ is said to have the \emph{Lipschitz $\mathcal A$-$\lambda$-bounded approximation property} (\textup{Lip}-$\mathcal A$-$\lambda$-BAP) if
$$
\Id_X \in \overline{\textup{Lip}_{\mathcal A}^{\lambda}(X,X)}^{\tau_c}.
$$
Let us simply say that $X$ has the \emph{\textup{Lip}-$\mathcal A$-BAP} if it has the \textup{Lip}-$\mathcal A$-$\lambda$-BAP for some $\lambda \geq 1$. 
\end{defn}

\section{Main results on the $\textup{Lip}$-$\mathcal{A}$-BAP}\label{section:Lip-AP}

In this section, we concentrate on the implicative relations and characterizations on the kinds of newly defined approximation properties. To do so, we first introduce a useful characterization result by arguing that the \textup{Lip}-$\mathcal A$-BAP of a Banach space $X$ is closely related to the $\tau_c$-approximability of the operator $\beta_X : \mathfrak{F} (X) \rightarrow X$. More precisely, it turns out that $\beta_X$ is approximated by Lipschitz maps with respect to $\tau_{c}$ if and only if it is approximable by bounded linear operators with respect to the same topology. Let us denote by $\iota_X$ the canonical embedding from $X^*$ into $\textup{Lip}_0 (X)$. Notice that $\iota_X = \beta_X^*$ up to isometry. 


\begin{thm}\label{thm:X-Lip-BAP}
Let $\mathcal A$ be an operator ideal. The following statements are equivalent for a Banach space $X$.
\begin{enumerate}
\setlength\itemsep{0.4em}
\item[\textup{(a)}] $X$ has the \textup{Lip}-$\mathcal A$-$\lambda$-$BAP$.
\item[\textup{(b)}] $\beta_X \in \overline{\textup{Lip}_{\mathcal A}^\lambda (\mathfrak{F} (X), X ) }^{\tau_{c}}$.
\item[\textup{(c)}] $\beta_X \in \overline{\mathcal A^\lambda (\mathfrak{F} (X), X ) }^{\tau_c}$.
\item[\textup{(d)}] $\iota_X \in \overline{\mathcal{A}_c^\lambda(X^*,\textup{Lip}_0(X))}^{\,w^*}$.
\end{enumerate} 
\end{thm}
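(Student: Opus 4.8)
The plan is to run a short cycle $\textup{(a)} \Rightarrow \textup{(b)} \Rightarrow \textup{(a)}$ by composition, to splice in $\textup{(c)}$ through a linearization argument, and to obtain $\textup{(c)} \Leftrightarrow \textup{(d)}$ by a projective tensor-product duality. The equivalence of $\textup{(a)}$ and $\textup{(b)}$ rests on the factorization $\Id_X = \beta_X \delta_X$ together with the Lipschitz ideal property (iii). For $\textup{(a)} \Rightarrow \textup{(b)}$, given a net $(f_\alpha)$ in $\textup{Lip}_{\mathcal A}^\lambda(X,X)$ converging to $\Id_X$ in $\tau_c$, I would set $g_\alpha := f_\alpha \beta_X$: property (iii) (with $R = \Id_X$, inner Lipschitz map $\beta_X \in \mathcal{L}(\mathfrak{F}(X),X) \subseteq \textup{Lip}_0$, and member $f_\alpha$) keeps $g_\alpha$ in $\textup{Lip}_{\mathcal A}$, while $\|g_\alpha\|_{\textup{Lip}} \leq \|f_\alpha\|_{\textup{Lip}}\|\beta_X\| \leq \lambda$; and since $\beta_X$ maps compacta of $\mathfrak{F}(X)$ to compacta of $X$, the convergence $f_\alpha \to \Id_X$ pushes forward to $g_\alpha \to \beta_X$ in $\tau_c$. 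For $\textup{(b)} \Rightarrow \textup{(a)}$ I would symmetrically precompose a net from $\textup{(b)}$ with the isometry $\delta_X$: if $g_\alpha \to \beta_X$ in $\tau_c$, then $f_\alpha := g_\alpha \delta_X$ lies in $\textup{Lip}_{\mathcal A}^\lambda(X,X)$ by (iii) (using $\|\delta_X\|_{\textup{Lip}} = 1$), and $f_\alpha = g_\alpha \delta_X \to \beta_X\delta_X = \Id_X$ in $\tau_c$ because $\delta_X$ carries compacta to compacta.

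The inclusion $\textup{(c)} \Rightarrow \textup{(b)}$ is immediate from $\mathcal{A} \subseteq \textup{Lip}_{\mathcal A}$, recorded before the theorem, which places $\mathcal{A}^\lambda(\mathfrak{F}(X),X)$ inside $\textup{Lip}_{\mathcal A}^\lambda(\mathfrak{F}(X),X)$. The substantive step is $\textup{(a)} \Rightarrow \textup{(c)}$, which I would prove by linearizing. Given $f_\alpha \to \Id_X$ in $\tau_c$ with $f_\alpha \in \textup{Lip}_{\mathcal A}^\lambda(X,X)$, the definition of $\textup{Lip}_{\mathcal A}$ gives $L_{f_\alpha} \in \mathcal{A}(\mathfrak{F}(X),X)$ with $\|L_{f_\alpha}\| = \|f_\alpha\|_{\textup{Lip}} \leq \lambda$, so $L_{f_\alpha} \in \mathcal{A}^\lambda(\mathfrak{F}(X),X)$. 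I claim $L_{f_\alpha} \to \beta_X$ in $\tau_c$. Since both maps are linear and uniformly bounded by $\lambda$ (recall $\|\beta_X\| \le 1$), and $\textup{span}\{\delta_x : x \in X\}$ is dense in $\mathfrak{F}(X)$, a standard $\varepsilon/3$ argument reduces the convergence to that dense subspace; there $L_{f_\alpha}\delta_x = f_\alpha(x) \to x = \beta_X\delta_x$ in norm, uniformly over any finite set, because $f_\alpha \to \Id_X$ uniformly on compacta. Hence $L_{f_\alpha} \to \beta_X$ in the strong operator topology, which on the bounded set $\mathcal{A}^\lambda(\mathfrak{F}(X),X)$ coincides with $\tau_c$, giving $\textup{(c)}$. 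Together with $\textup{(c)} \Rightarrow \textup{(b)} \Rightarrow \textup{(a)}$, this settles the equivalence of $\textup{(a)}$, $\textup{(b)}$, $\textup{(c)}$.

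Finally, for $\textup{(c)} \Leftrightarrow \textup{(d)}$ I would pass to conjugates using $\iota_X = \beta_X^*$. Writing $\mathcal{L}(X^*, \textup{Lip}_0(X)) = \mathcal{L}(X^*, \mathfrak{F}(X)^*) = (X^* \widehat{\otimes}_\pi \mathfrak{F}(X))^*$, the $w^*$-topology in $\textup{(d)}$ is tested against elementary tensors, and the identity $\langle T^*, x^*\otimes\mu\rangle = \langle x^*, T\mu\rangle$ shows that $w^*$-convergence of a net of conjugates $T_\alpha^*$ to $\beta_X^*$ is exactly convergence of $T_\alpha$ to $\beta_X$ in the weak operator topology. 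Thus $\textup{(d)}$ asserts that $\beta_X$ lies in the weak-operator closure of the convex, norm-bounded set $\mathcal{A}^\lambda(\mathfrak{F}(X),X)$, whereas $\textup{(c)}$ asserts that it lies in the $\tau_c$-closure. These agree: on the $\lambda$-ball $\tau_c$ equals the strong operator topology, and for a convex set the strong- and weak-operator closures coincide, since both topologies have the same continuous linear functionals and Mazur's theorem applies, while uniform boundedness keeps all limits of norm $\le \lambda$. The main obstacle, I expect, is precisely this duality bookkeeping: justifying the density of $X^* \otimes \mathfrak{F}(X)$ in the projective tensor product so as to upgrade convergence on elementary tensors to genuine $w^*$-convergence, and carrying out the convex-closure interchange cleanly; this is the part that genuinely leans on the duality framework of Oja.
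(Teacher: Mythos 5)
Your proposal is correct and takes essentially the same route as the paper: (b) $\Rightarrow$ (a) by precomposing with $\delta_X$, (a) $\Rightarrow$ (c) by linearizing via $f \mapsto L_f$ together with the bounded-net density argument (span of $\{\delta_x\}$ dense, SOT $=$ $\tau_c$ on bounded sets), and (c) $\Leftrightarrow$ (d) via the duality $\mathcal{L}(X^*,\textup{Lip}_0(X)) = (X^* \widehat{\otimes}_{\pi} \mathfrak{F}(X))^*$, boundedness to upgrade convergence on elementary tensors, and convex combinations (Mazur) to pass from the weak operator closure back to the $\tau_c$-closure. The only difference is cosmetic: you additionally prove (a) $\Rightarrow$ (b) directly by precomposing with $\beta_X$, which is sound but redundant, since the paper obtains that implication as (a) $\Rightarrow$ (c) $\Rightarrow$ (b).
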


In item (d), the notation $w^*$ is used for the weak-star topology on $\mathcal{L} (X^*, \Lip (X))$ with respect to $X^* \widehat{\otimes}_{\pi} \free (X)$, where $X^* \widehat{\otimes}_{\pi} \free (X)$ denotes the projective tensor product of $X^*$ and $\free (X)$.

\begin{proof}[Proof of Theorem \ref{thm:X-Lip-BAP}] 
(c) $\Rightarrow$ (b) follows from the fact that $\mathcal{A} \subseteq \textup{Lip}_{\mathcal A}$. 

(b) $\Rightarrow$ (a).
 Let $K$ be a compact subset of $X$ and let $\varepsilon > 0$
be given. As $\delta_X (K)$ is a compact subset of $\mathfrak{F} (X)$, 
there exists $f \in \textup{Lip}_{\mathcal A}^\lambda (\mathfrak{F}(X), X)$ such that
$$\varepsilon\geq\sup_{x \in K}\|(\beta_X-f)(\delta_x)\|
=\sup_{x \in K}\|\beta_X \delta_X(x)-f \delta_X(x)\|
=\sup_{x \in K}\|x-f\delta_X(x)\|.$$
Since $f \delta_X \in \textup{Lip}_{\mathcal A}^\lambda(X, X)$,
(a) follows. 

(a) $\Rightarrow$ (c).
As any element $\mu \in \mathfrak{F}(X)$ can be approximated by a finite combination of elements in $\delta(X)$, it suffices to claim that for a compact set $K$ in $X$ and $\varepsilon >0$, there exists $L_f \in \mathcal{A}^\lambda (\mathfrak{F} (X), X)$ such that $\sup_{x \in K} \|L_f \delta_X(x) - x \| \leq \varepsilon$. 
Our assumption implies that there exists $f \in \textup{Lip}_{\mathcal A}^\lambda(X, X)$ such that
$$\varepsilon\geq\sup_{x \in K}\|x-f(x)\|
=\sup_{x \in K}\| x -L_f\delta_X(x)\|
=\sup_{x \in K}\|(\beta_X-L_f)(\delta_x)\|.$$
Since $L_f \in \mathcal A^\lambda(\mathfrak{F}(X), X)$, we complete the proof. 

(c) $\Rightarrow$ (d). 
Let $(T_\alpha)$ be a net in $\mathcal{A}^\lambda (\free (X), X)$ such that $(T_\alpha)$ converges to $\beta_X$ in $\tau_c$. For each $\mu \in \free (X)$ and $x^* \in X^*$, we have that 
\[
\langle \mu, T_\alpha^* (x^*)\rangle = x^* (T_\alpha (\mu)) \rightarrow x^* (\beta_X (\mu)) = \langle \mu, \iota_X (x^*) \rangle.
\]
From this, we can deduce that $T_\alpha^*$ converges to $\iota_X$ in $\mathcal{L}(X^*, \Lip (X))=\mathcal{L}(X^*, \free (X)^*) = (X^* \widehat{\otimes}_{\pi} \free (X))^*$ in the weak-star topology since $(T_\alpha)$ is bounded. 

(d) $\Rightarrow$ (c). If we take a net $(T_\alpha)$ in $\mathcal{A}^\lambda (\free (X), X)$ so that $(T_\alpha^*)$ converges to $\iota_X$ in the weak-star topology, then as above, we have that 
\[
x^* (T_\alpha (\mu)) \rightarrow x^* (\beta_X (\mu))
\]
for each $\mu \in \free (X)$ and $x^* \in X^*$. It follows that $(T_\alpha)$ converges to $\beta_X$ in the weak operator topology in $\mathcal{L} (\mathfrak{F}(X), X)$. By considering their convex combinations, we see that $\beta_X$ belongs to the closure in the strong operator topology of $\mathcal{A}^\lambda (\free (X), X)$ which coincides with the $\tau_c$-closure of $\mathcal{A}^\lambda (\free (X), X)$. 

\end{proof}

Next, we show that the \textup{Lip}-$\mathcal{A}$-BAP and the $\mathcal{A}$-BAP are equivalent on a Lipschitz-free space $\mathfrak{F} (X)$ over a Banach space $X$, and observe that the \textup{Lip}-$\mathcal{A}$-BAP of $\mathfrak{F} (X)$ is inherited to $X$. If we focus on the last assertion on the following theorem, one may notice that it is a weaker condition of $\textup{Lip}_0(X)$ having the $\mathcal A$-$\lambda$-BAP with conjugate operators, which will be covered in the paragraph preceding Proposition \ref{prop:Lip(X)-F(X)-BAP}.


\begin{thm}\label{thm:equiv3}
Let $\mathcal A$ be an operator ideal.
 The following statements are equivalent for a Banach space $X$.
 \begin{enumerate}
 \setlength\itemsep{0.4em}
\item[\textup{(a)}] $\mathfrak{F}(X)$ has the \textup{Lip}-$\mathcal A$-$\lambda$-$BAP$.
\item[\textup{(b)}] $\mathfrak{F}(X)$ has the $\mathcal A$-$\lambda$-BAP.
\item[\textup{(c)}] $\delta_X \in \overline{\textup{Lip}_{\mathcal A}^\lambda (X, \mathfrak{F}(X))}^{\tau_c}$.
\item[\textup{(d)}] $\Id_{\Lip (X)} \in \overline{\mathcal{A}_c^\lambda(\textup{Lip}_0(X),\textup{Lip}_0(X))}^{\, w^*}$,
\end{enumerate} 
Furthermore, if one of \textup{(a)}-\textup{(d)} holds, then $X$ has the \textup{Lip}-$\mathcal A$-$\lambda$-$BAP$.
\end{thm}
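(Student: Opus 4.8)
The plan is to reduce the final assertion to Theorem \ref{thm:X-Lip-BAP} rather than to argue it directly. Since the four conditions (a)--(d) are already shown to be equivalent, I may assume the most convenient one, namely (b): $\mathfrak{F}(X)$ has the $\mathcal A$-$\lambda$-BAP. My goal will be to deduce condition (c) of Theorem \ref{thm:X-Lip-BAP}, that is, $\beta_X \in \overline{\mathcal A^\lambda(\free(X),X)}^{\tau_c}$, because that condition is equivalent to $X$ having the \textup{Lip}-$\mathcal A$-$\lambda$-BAP.

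The core idea is transport by left-composition with the quotient map $\beta_X$. Concretely, I would fix a net $(T_\alpha)$ in $\mathcal A^\lambda(\free(X),\free(X))$ converging to $\Id_{\free(X)}$ in $\tau_c$, which exists by the $\mathcal A$-$\lambda$-BAP of $\free(X)$, and set $S_\alpha := \beta_X T_\alpha \in \mathcal L(\free(X),X)$. The ideal property of $\mathcal A$ (closure under composition with bounded operators on the left) ensures $S_\alpha \in \mathcal A(\free(X),X)$, while $\|S_\alpha\| \leq \|\beta_X\|\,\|T_\alpha\| \leq \lambda$, since $\beta_X$ is a quotient map and hence has norm at most $1$. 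Thus each $S_\alpha$ lies in $\mathcal A^\lambda(\free(X),X)$.

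It then remains to verify $\tau_c$-convergence $S_\alpha \to \beta_X$, where $\tau_c$ is the compact-open topology on maps defined on $\free(X)$. For any compact $K \subseteq \free(X)$ the estimate
$$\sup_{\mu \in K} \|S_\alpha(\mu) - \beta_X(\mu)\| = \sup_{\mu \in K}\|\beta_X(T_\alpha(\mu) - \mu)\| \leq \sup_{\mu \in K}\|T_\alpha(\mu) - \mu\|$$
holds by $\|\beta_X\| \leq 1$, and the right-hand side tends to $0$ by the assumed $\tau_c$-convergence of $(T_\alpha)$ to $\Id_{\free(X)}$. This exhibits $\beta_X$ in the $\tau_c$-closure of $\mathcal A^\lambda(\free(X),X)$, which is precisely condition (c) of Theorem \ref{thm:X-Lip-BAP}, and the desired conclusion follows.

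I do not anticipate a genuine obstacle: the entire content is the observation that left-composing an approximate identity of $\free(X)$ (living inside $\mathcal A^\lambda$) with $\beta_X$ yields an approximation of $\beta_X$ inside $\mathcal A^\lambda(\free(X),X)$, and that this operation respects both the ideal membership and the norm bound. The only points deserving (minor) care are confirming that the quotient map $\beta_X$ has norm at most $1$ so that the bound $\lambda$ is preserved, and keeping track of the fact that the relevant compact sets in condition (c) live in $\free(X)$ rather than in $X$; both are immediate from the definitions recalled in Section \ref{section:preliminaries}.
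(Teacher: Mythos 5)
Your argument for the final assertion is correct, but it covers only a fragment of the theorem: the statement also asserts the equivalence of (a)--(d), and you dispose of that entirely by writing that ``the four conditions (a)--(d) are already shown to be equivalent.'' Nothing earlier in the paper establishes this---Theorem \ref{thm:X-Lip-BAP} applied to the space $\mathfrak{F}(X)$ would yield conditions involving $\beta_{\mathfrak{F}(X)}$ and $\mathfrak{F}(\mathfrak{F}(X))$, not conditions (b), (c), (d) above---so these equivalences are precisely what this theorem must prove, and they are its bulk. The paper proves them as follows: (b) $\Rightarrow$ (a) from $\mathcal{A} \subseteq \textup{Lip}_{\mathcal{A}}$; (a) $\Rightarrow$ (c) by composing an approximating map with $\delta_X$; (c) $\Rightarrow$ (b) by passing from $f$ to $L_f$ and using that finite linear combinations of Dirac deltas are dense in $\mathfrak{F}(X)$, so that compact sets of the form $\delta_X(K)$ suffice; and (b) $\Leftrightarrow$ (d) by the duality/weak-star argument of Theorem \ref{thm:X-Lip-BAP}. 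None of this appears in your proposal, so there is a genuine gap.

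The part you do prove is correct and takes a mildly different route from the paper's. The paper deduces the final assertion directly from (c): choose $f \in \textup{Lip}_{\mathcal{A}}^{\lambda}(X,\mathfrak{F}(X))$ close to $\delta_X$ on a compact set $K$ and observe that $\beta_X f \in \textup{Lip}_{\mathcal{A}}^{\lambda}(X,X)$ approximates $\Id_X$ on $K$. You instead start from (b), push an approximate identity $(T_\alpha) \subset \mathcal{A}^{\lambda}(\mathfrak{F}(X),\mathfrak{F}(X))$ forward to $\beta_X T_\alpha \in \mathcal{A}^{\lambda}(\mathfrak{F}(X),X)$, and then invoke the equivalence (c) $\Leftrightarrow$ (a) of Theorem \ref{thm:X-Lip-BAP}. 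Both arguments rest on $\|\beta_X\| \leq 1$ and left-composition, and both are sound; yours is a clean alternative, at the cost of routing through Theorem \ref{thm:X-Lip-BAP} where the paper's verification is self-contained. Had you supplemented it with proofs of (a) $\Leftrightarrow$ (b) $\Leftrightarrow$ (c) $\Leftrightarrow$ (d), the proposal would be complete.
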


\begin{proof}
(b) $\Rightarrow$ (a) follows from the fact that $\mathcal{A} \subseteq \textup{Lip}_{\mathcal A}$. 

(a) $\Rightarrow$ (c).
 Let $K$ be a compact subset of $X$ and let $\varepsilon > 0$
be given. 
By (a),
there exists an $f \in \textup{Lip}_{\mathcal A}^{\lambda}(\mathfrak{F}(X), \mathfrak{F}(X))$ such that
$$\sup_{x \in K}\|\delta_X(x)-f \delta_X(x)\|_{\mathfrak{F}(X)}\leq \varepsilon.$$
Since $f \delta_X \in \textup{Lip}_{\mathcal A}^{\lambda}(X, \mathfrak{F}(X))$,
(c) follows. 

(c) $\Rightarrow$ (b). 
As in the proof of (a) $\Rightarrow (c)$ in Theorem \ref{thm:X-Lip-BAP}, it suffices to consider a set $\delta_X (K)$ for a compact subset $K$ of $X$ instead of taking an arbitrary compact subset of $\mathfrak{F} (X)$. 
Let $\varepsilon > 0$ be given. By our assumption, there exists $f \in \textup{Lip}_{\mathcal A}^{\lambda}(X, \mathfrak{F}(X))$ such that
$$\varepsilon\geq\sup_{x \in K}\|\delta_X(x)-f(x)\|_{\mathfrak{F}(X)}
=\sup_{x \in K}\|\delta_X (x) -L_f \delta_X (x)\|_{\mathfrak{F}(X)}.$$
Since $L_f \in \mathcal A^{\lambda}(\mathfrak{F}(X), \mathfrak{F}(X))$, we can conclude that $\mathfrak{F}(X)$ has the $\mathcal A$-$\lambda$-BAP.

(b) $\Leftrightarrow$ (d) can be proved in an analogous way as in Theorem \ref{thm:X-Lip-BAP}. 

Finally, suppose that $\delta_X$ belongs to the $\tau_c$-closure of ${\textup{Lip}^{\lambda}_{\mathcal A}(X, \mathfrak{F}(X))}$. Let $K$ be a compact subset of $X$ and let $\varepsilon > 0$ be given. 
Then there exists $f \in \textup{Lip}^{\lambda}_{\mathcal A}(X, \mathfrak{F}(X))$ such that 
 $$\varepsilon \geq  \sup_{x \in K} \|f(x) - \delta_X(x)\|_{\mathfrak{F}(X)}
 \geq \sup_{x \in K}\|\beta_X f(x) -\beta_X \delta_X(x)\|
 =\sup_{x \in K}\|\beta_X f(x) -x\|.$$
Since $\beta_X f \in \textup{Lip}^{\lambda}_{\mathcal A}(X, X)$, we conclude that $X$ has the \textup{Lip}-$\mathcal A$-$\lambda$-$BAP$.

\end{proof}

It is worthwhile to note here that the equivalence between the $\mathcal{A}$-BAP and the \textup{Lip}-$\mathcal{A}$-BAP for an arbitrary space is unknown contrary to the result of the case $\mathcal{A}=\mathcal{F}$, as the original equivalence in \cite[Theorem 5.3]{GK} heavily depends on the fact that $\mathfrak{F}(E)$ has the 1-BAP when $E$ is finite-dimensional \cite[Proposition 5.1]{GK}. Nevertheless, we are still able to observe that the $\mathcal{A}$-BAP and the \textup{Lip}-$\mathcal{A}$-BAP are equivalent in some particular cases. To begin with, we recall that a Banach space $X$ is said to have the \emph{isometric lifting property} if there exists $U \in \mathcal{L}(X,\mathfrak{F}(X))$ with $\|U\|=1$ such that $\beta_X U = \Id_X$. It is shown that every separable Banach space has the isometric lifting property \cite[Theorem 3.1]{GK}.

\begin{prop}
Let $X$ be a Banach space with the isometric lifting property. Then, $X$ has the $\mathcal{A}$-$\lambda$-BAP if $X$ has the \textup{Lip}-$\mathcal{A}$-$\lambda$-BAP.
\end{prop}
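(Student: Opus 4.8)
The plan is to reduce matters to the characterization of the \textup{Lip}-$\mathcal{A}$-$\lambda$-BAP established in Theorem \ref{thm:X-Lip-BAP}, and then to transport an approximation of the quotient map $\beta_X$ back to an approximation of $\Id_X$ by means of the isometric lifting. Concretely, I would first invoke the equivalence (a)~$\Leftrightarrow$~(c) of Theorem \ref{thm:X-Lip-BAP}: since $X$ has the \textup{Lip}-$\mathcal{A}$-$\lambda$-BAP, there is a net $(T_\alpha)$ in $\mathcal{A}^\lambda(\mathfrak{F}(X),X)$ with $T_\alpha \to \beta_X$ in $\tau_c$, i.e. uniformly on compact subsets of $\mathfrak{F}(X)$. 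Let $U \in \mathcal{L}(X,\mathfrak{F}(X))$ be the lifting furnished by the hypothesis, with $\|U\|=1$ and $\beta_X U = \Id_X$, and set $S_\alpha := T_\alpha U \in \mathcal{L}(X,X)$.

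Two things then have to be verified, namely membership in $\mathcal{A}^\lambda(X,X)$ and $\tau_c$-convergence to $\Id_X$. For the first, the operator ideal property of $\mathcal{A}$ gives $S_\alpha = T_\alpha U \in \mathcal{A}(X,X)$ as a composition of an ideal operator with a bounded operator on the right, while the submultiplicativity of the norm yields $\|S_\alpha\| \le \|T_\alpha\|\,\|U\| \le \lambda$, so that $S_\alpha \in \mathcal{A}^\lambda(X,X)$. It is exactly here that the \emph{isometric} part $\|U\|=1$ of the lifting property is used: any strictly larger norm of the lift would inflate the bound beyond $\lambda$ and break the membership. For the second, I would fix a compact set $K \subseteq X$; since $U$ is continuous, $U(K)$ is compact in $\mathfrak{F}(X)$, and using $\beta_X U = \Id_X$ one computes $\sup_{x\in K}\|S_\alpha x - x\| = \sup_{x\in K}\|(T_\alpha-\beta_X)(Ux)\| = \sup_{\mu \in U(K)}\|(T_\alpha-\beta_X)(\mu)\| \to 0$, the convergence being precisely the $\tau_c$-convergence of $(T_\alpha)$ to $\beta_X$ on the compact set $U(K)$. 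Hence $S_\alpha \to \Id_X$ in $\tau_c$, and therefore $\Id_X \in \overline{\mathcal{A}^\lambda(X,X)}^{\tau_c}$, which is the $\mathcal{A}$-$\lambda$-BAP.

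I do not anticipate a genuine obstacle in this argument: it is a clean composition trick, and the only points requiring care are the bookkeeping of the ideal and norm properties under the right composition with $U$, together with the observation that $U$ preserves compactness so that $\tau_c$-approximability of $(T_\alpha)$ for $\beta_X$ is inherited by $(S_\alpha)$ for $\Id_X$. The entire conceptual weight is carried by the isometric lifting property, whose function is to convert the one-sided $\tau_c$-approximability of the (non-injective) quotient $\beta_X$ into the $\tau_c$-approximability of $\Id_X$ on $X$ itself \emph{without any loss in the norm bound}. This is also why the reverse implication, or the version without an isometric lift, is more delicate and cannot be obtained by the same elementary composition.
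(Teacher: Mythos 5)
Your proof is correct and is essentially identical to the paper's: both invoke Theorem \ref{thm:X-Lip-BAP} to obtain a net $(T_\alpha) \subset \mathcal{A}^\lambda(\mathfrak{F}(X),X)$ converging to $\beta_X$ in $\tau_c$, and then compose with the norm-one lift $U$ to get $T_\alpha U \to \Id_X$ in $\tau_c$ with $T_\alpha U \in \mathcal{A}^\lambda(X,X)$. Your write-up merely makes explicit the bookkeeping (compactness of $U(K)$, the ideal and norm estimates) that the paper leaves implicit.
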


\begin{proof}
As $X$ has the \textup{Lip}-$\mathcal{A}$-$\lambda$-BAP, we have $\beta_X \in \overline{\mathcal{A}^\lambda(\mathfrak{F}(X),X)}^{\tau_c}$ by Theorem \ref{thm:X-Lip-BAP}. Take $(T_\alpha) \subset \mathcal{A}^\lambda(\mathfrak{F}(X),X)$ so that $T_\alpha$ converges to $\beta_X$ in the $\tau_c$-topology. Let us denote by $U$ a norm-one linear operator in $\mathcal{L}(X,\mathfrak{F}(X))$ such that $\beta_X U =\Id_X$. Note that $T_\alpha U$ converges to $\beta_X U = \Id_X$ in the $\tau_c$-topology and that $T_\alpha U \in \mathcal{A}^\lambda(X,X)$, as desired.
\end{proof}

As already shown in Theorem \ref{thm:equiv3}, a Banach space $X$ has the \textup{Lip}-$\mathcal{A}$-$\lambda$-BAP whenever $\mathfrak{F}(X)$ has the property. The following result shows that the inheritance of $\mathcal{A}$-$\lambda$-BAP of the Lipschitz free space $\mathfrak{F}(X)$ to $X$ is also true under some natural assumption on the given operator ideal $\mathcal{A}$.

\begin{prop}\label{prop:equiv4}
Suppose that $\mathcal A$ is an operator ideal and $X$ is a Banach space satisfying that 
 $T^{**} \in \mathcal A(\mathfrak{F}(X)^{**}, \mathfrak{F}(X))$ whenever 
$T \in \mathcal A(\mathfrak{F}(X), \mathfrak{F}(X))$.
Then $X$ has the $\mathcal A$-$\lambda$-BAP if $\mathfrak{F}(X)$ has the $\mathcal A$-$\lambda$-BAP.
\end{prop}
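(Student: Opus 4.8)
The plan is to exploit the assumption that $\mathfrak{F}(X)$ has the $\mathcal{A}$-$\lambda$-BAP by transporting the approximating operators on $\mathfrak{F}(X)$ over to operators on $X$ via the natural maps $\delta_X$ and $\beta_X$. Recall that $\beta_X \delta_X = \Id_X$, so a first naive attempt would be to take a net $(T_\alpha) \subset \mathcal{A}^\lambda(\mathfrak{F}(X),\mathfrak{F}(X))$ converging to $\Id_{\mathfrak{F}(X)}$ in $\tau_c$ and form $\beta_X T_\alpha \delta_X$. The obstruction is that $\delta_X$ is only a nonlinear isometry, so $\beta_X T_\alpha \delta_X$ need not be linear and cannot directly serve as a member of $\mathcal{A}^\lambda(X,X)$. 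The key is therefore to linearize: instead of precomposing with the nonlinear $\delta_X$, I would look for a bounded linear right inverse of $\beta_X$, which is exactly where a lifting-type argument enters.

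First, I would invoke the $\mathcal{A}$-$\lambda$-BAP of $\mathfrak{F}(X)$ to fix a net $(T_\alpha)$ in $\mathcal{A}^\lambda(\mathfrak{F}(X),\mathfrak{F}(X))$ with $T_\alpha \to \Id_{\mathfrak{F}(X)}$ in the compact-open topology. The aim is to produce, from each $T_\alpha$, an operator in $\mathcal{A}^\lambda(X,X)$ approximating $\Id_X$ on a given compact $K \subseteq X$. Here the hypothesis on $\mathcal{A}$ (stability under biadjoints, i.e. $T \in \mathcal{A}(\mathfrak{F}(X),\mathfrak{F}(X))$ implies $T^{**} \in \mathcal{A}(\mathfrak{F}(X)^{**},\mathfrak{F}(X))$) is precisely what I expect to use. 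The point is that even without a linear right inverse of $\beta_X$ inside $X$ itself, one always has a canonical \emph{weak-star} section at the bidual level: the barycenter map $\beta_X \colon \mathfrak{F}(X) \to X$ admits $\delta_X$ as a (nonlinear) right inverse, but passing to the bidual one can replace $\delta_X$ by the linear inclusion $\mathfrak{F}(X) \hookrightarrow \mathfrak{F}(X)^{**}$ composed with $\beta_X^{**}$, and use that $\beta_X^{**}$ followed by the bidual inclusion recovers $\Id_X$ after a suitable weak-star limit.

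Concretely, the plan is to consider the composition $\beta_X \, T_\alpha^{**} \, J_{\mathfrak{F}(X)}\, \delta_X$, where $J_{\mathfrak{F}(X)}\colon \mathfrak{F}(X) \to \mathfrak{F}(X)^{**}$ is the canonical embedding, and to show that after replacing $\delta_X$ by its linearization one lands in $\mathcal{A}^\lambda(X,X)$: by the ideal property (iii) in the linear setting, $\beta_X\, T_\alpha^{**}$ composed on the right with any bounded operator into $\mathfrak{F}(X)^{**}$ stays in $\mathcal{A}$, and the biadjoint hypothesis guarantees $T_\alpha^{**} \in \mathcal{A}(\mathfrak{F}(X)^{**},\mathfrak{F}(X))$ so that the norm bound $\lambda$ is preserved. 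I would then verify the $\tau_c$-convergence to $\Id_X$ on $K$ by tracking the estimate $\|\beta_X T_\alpha^{**} J_{\mathfrak{F}(X)} \delta_x - x\| = \|\beta_X(T_\alpha^{**} J_{\mathfrak{F}(X)}\delta_x - \delta_x)\|$ and using $T_\alpha^{**} J_{\mathfrak{F}(X)} = J_{\mathfrak{F}(X)}' T_\alpha$ on $\mathfrak{F}(X)$ (the biadjoint restricts to $T_\alpha$ on the predual copy), which reduces the quantity to $\|\beta_X(T_\alpha \delta_x - \delta_x)\| \le \|T_\alpha \delta_x - \delta_x\|_{\mathfrak{F}(X)}$, controlled by the original $\tau_c$-convergence of $(T_\alpha)$ on the compact set $\delta_X(K)$.

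The main obstacle I anticipate is the linearization step: ensuring that the composite operator one builds is genuinely a \emph{bounded linear} map on $X$ lying in $\mathcal{A}^\lambda(X,X)$, rather than merely a Lipschitz map, and that the norm bound $\lambda$ survives the passage to the biadjoint. This is exactly the role of the standing hypothesis on $\mathcal{A}$, and the crux is to identify the correct linear operator on $X$ whose action on $\delta_X(K)$ matches $\beta_X T_\alpha \delta_X$; I expect this to come down to checking that $T_\alpha^{**}$ restricted appropriately, followed by $\beta_X$, factors through the linear structure of $X$ in a norm-controlled way. Once the right factorization is in place, both the membership in $\mathcal{A}^\lambda$ and the $\tau_c$-approximation of $\Id_X$ should follow routinely from the ideal axioms and the convergence of $(T_\alpha)$.
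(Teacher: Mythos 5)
Your proposal stalls exactly at the point you yourself flag as ``the main obstacle,'' and that obstacle is the entire content of the proposition. The map you actually write down, $\beta_X T_\alpha^{**} J_{\mathfrak{F}(X)} \delta_X$, coincides with $\beta_X T_\alpha \delta_X$ (since $T_\alpha^{**} J_{\mathfrak{F}(X)} = T_\alpha$ under the standing hypothesis), and this is still a composition with the nonlinear $\delta_X$: it is merely a Lipschitz map in $\textup{Lip}_{\mathcal{A}}^{\lambda}(X,X)$, so your convergence estimate only re-proves that $X$ has the \textup{Lip}-$\mathcal{A}$-$\lambda$-BAP --- which already follows from Theorem \ref{thm:equiv3} with no hypothesis on $\mathcal{A}$ at all. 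The promised ``linearization of $\delta_X$'' is never constructed, and it cannot be conjured from the ideal axioms or weak-star limits in a routine way: a bounded linear right inverse of $\beta_X$ with values in $\mathfrak{F}(X)$ need not exist (that is the isometric lifting property, which fails for general $X$), so there is no linear operator on $X$ whose ``action on $\delta_X(K)$ matches $\beta_X T_\alpha \delta_X$'' as you hope.

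What the paper uses to close this gap is a specific nontrivial theorem of Godefroy--Kalton \cite[Proposition 4.7]{GK}: given a finite-dimensional subspace $E \subseteq X$, there is a norm-one \emph{linear} operator $W : X \to \mathfrak{F}(X)^{**}$ with $\beta_X^{**} W x = x$ for all $x \in X$ and, crucially, $W(E) \subseteq \mathfrak{F}(X)$. The proof then takes a finite $\varepsilon_0$-net $\{x_1,\dots,x_n\}$ of the compact set $K$ with $\varepsilon_0 = (2+\lambda)^{-1}\varepsilon$, puts $E = \operatorname{span}\{x_1,\dots,x_n\}$, picks $T \in \mathcal{A}^\lambda(\mathfrak{F}(X),\mathfrak{F}(X))$ with $\|TWx_j - Wx_j\| < \varepsilon_0$ (legitimate precisely because $Wx_j \in \mathfrak{F}(X)$), and sets $S := \beta_X^{**} T^{**} W$. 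Here your two correct observations finally do their work: the hypothesis $T^{**} \in \mathcal{A}(\mathfrak{F}(X)^{**},\mathfrak{F}(X))$ makes $S$ a member of $\mathcal{A}^{\lambda}(X,X)$ (an $\mathcal{A}$-operator composed with the bounded linear maps $W$ and $\beta_X$), and the identity $T^{**}Wx_j = TWx_j$ gives $\|Sx_j - x_j\| < \varepsilon_0$, whence $\|Sx - x\| \leq \lambda\varepsilon_0 + \varepsilon_0 + \varepsilon_0 = \varepsilon$ on all of $K$. So your high-level plan (bidual, biadjoint hypothesis, lifting) points in the right direction, but without invoking the Godefroy--Kalton bidual lifting with its local $\mathfrak{F}(X)$-valuedness --- and without the finite-net reduction that this local property forces --- the argument never produces a single linear operator in $\mathcal{A}^\lambda(X,X)$, and the proof is not complete.
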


\begin{proof}
We follow the argument used in the proof of \cite[Theorem 5.3]{GK}. Let $\varepsilon > 0$ and a compact set $K \subset X$ be given. Let $\{x_1, \dots, x_n\}$ be an $\varepsilon_0$-net of $K$ and define $E := \operatorname{span}\{ x_1, \dots, x_n\}$, where $\varepsilon_0 = (2+\lambda)^{-1} \varepsilon$. By \cite[Proposition 4.7]{GK}, we can find a linear operator $W : X \rightarrow \mathfrak{F}(X)^{**}$ with $\| W \| =1$ so that $\beta_X^{**} W x = x$ for every $x \in X$ and $W(E) \subset \mathfrak{F} (X)$. Choose $T \in \mathcal{A}^\lambda (\mathfrak{F} (X), \mathfrak{F}(X))$ such that
$$
\| TW(x_j) - W x_j \| < \varepsilon_0
$$
for every $1 \leq j \leq n$. Let us define the map $S := \beta_X^{**} T^{**} W$. Since $T \in \mathcal{A}(\mathfrak{F}(X),\mathfrak{F}(X))$, we have again that $T^{**} \in \mathcal{A}(\mathfrak{F}(X)^{**},\mathfrak{F}(X))$ and hence $S \in \mathcal{A}(X,X)$. Clearly, $\| S \| \leq \lambda$ and 
$$
\| S x_j - x_j \| = \| \beta_X^{**} T^{**} W (x_j) - \beta_X^{**} W (x_j) \| \leq \| TW (x_j) - W x_j \| < \varepsilon_0 
$$
for every $1 \leq j \leq n$. It follows that 
$\| S x - x \|  < \varepsilon $ for every $x \in K$. Thus, $X$ has the $\mathcal{A}$-$\lambda$-BAP.
\end{proof}

In particular, if an operator ideal $\mathcal{A}$ satisfies that $T^{**} \in \mathcal{A}(Z^{**},Z)$ whenever $T \in \mathcal{A}(Z,Z)$ for every Banach space $Z$, then the assertion follows. Thus we have the following direct result due to Schauder's theorem and Gantmacher's theorem.

\ \

\textbf{Example.}
The assumption in Proposition \ref{prop:equiv4} holds for every Banach space $X$ when $\mathcal{A} =\mathcal{F}, \mathcal{K}$ or $\mathcal{W}$.

\ \

Recall that Sinha and Karn introduced a notion of $p$-compact sets and $p$-compact operators in \cite{SK}, and the concept was extended to the Lipschitz case in \cite{ADT}. It is known from \cite[Theorem 4.2]{SK} that the set $\mathcal{K}_p (X, Y)$ of $p$-compact operators becomes an operator ideal with a specific norm equipped on it. According to \cite[Corollary 3.6]{DPS}, an operator $T \in \mathcal{L} (X,Y)$ is $p$-compact if and only if $T^{**} \in \mathcal{L} (X^{**},Y^{**})$ is $p$-compact. Moreover, the operator ideal $\mathcal{K}_p$ is regular, that is, $T \in \mathcal{K}_p (X, Y)$ whenever $j_Y T \in \mathcal{K}_p (X, Y^{**})$, where $j_Y : Y \rightarrow Y^{**}$ is the canonical embedding \cite[Theorem 5]{P}. We also remark that it is clear by definition that a $p$-compact operator is a compact operator. 

On the other hand, it is well-known that the set of $p$-summing operators $\Pi_p (X,Y)$ (for its definition, see \cite{DJT} for instance) is an operator ideal under a specific norm and every $p$-summing operator is weakly compact \cite[Theorem 2.17]{DJT}. Moreover, an operator $T \in \mathcal{L} (X, Y)$ is $p$-summing if and only if $T^{**} \in \mathcal{L} (X^{**},Y^{**})$ is $p$-summing \cite[Proposition 2.19]{DJT}. It is clear by definition that the operator ideal $\Pi_p$ is regular.  
Hence, we have just obtained the following consequence:

\ \

\textbf{Example.}
The assumption in Proposition \ref{prop:equiv4} holds for every Banach space $X$ when $\mathcal{A}=\mathcal{K}_p$ or $\Pi_p$ for $1 \leq p < \infty$.

\ \

Next, we would like to discuss the $\mathcal{A}$-BAP on the space $\textup{Lip}_0(X)$, and observe that this is distinguished from $X$ having the \textup{Lip}-$\mathcal{A}$-BAP. Recall from \cite{O} that a Banach space $X$ is \emph{strongly extendably locally reflexive} if there is $\lambda \geq 1$ such that for all finite-dimensional subspaces $E \subseteq X^{**}$ and $F \subseteq X^*$  and given $\varepsilon>0$, there exists $T \in \mathcal{L}(X^{**},X^{**})$ with $\|T\| \leq \lambda + \varepsilon$ such that
$$
T(E) \subseteq X, \quad T^* (X^*) \subseteq X^* \quad \text{and} \quad (Tx^{**})(x^*)=x^{**}(x^*) \quad \text{for } x^{**} \in E \text{ and } x^* \in F.
$$
One can notice that a consequence from Theorem \ref{thm:equiv3} is that if $\textup{Lip}_0(X)$ has the $\mathcal{A}$-$\lambda$-BAP with conjugate operators, then $\mathfrak{F}(X)$ has the $\mathcal{A}$-$\lambda$-BAP. Let us remark that this can be also derived from a previously known result \cite[Theorem 2.1]{O}.  
A careful examination shows that some of assumptions of \cite[Lemma 3.3]{O} and \cite[Theorem 3.7.(b)]{O} can be formally weakened and yields the following result.
Let us write $\mathcal{A}^{dd} (X, Y) = \{ T \in \mathcal{L} (X, Y) : T^{**} \in \mathcal{A} (X^{**}, Y^{**})\}$.

\begin{prop}\label{prop:Lip(X)-F(X)-BAP}
Let $\mathcal{A}$ be an operator ideal such that $\mathcal{A}=\mathcal{A}^{dd}$. Suppose $\mathcal{A}$ satisfies that $T^{**} \in \mathcal{A}(Z^{**},Z)$ whenever $T \in \mathcal{A}(Z,Z)$ for every Banach space $Z$. Then, the following are equivalent.
\begin{enumerate}
\item[\textup{(a)}] $\textup{Lip}_0(X)$ has the $\mathcal{A}$-BAP with conjugate operators.
\item[\textup{(b)}] $\mathfrak{F}(X)$ has the $\mathcal{A}$-BAP and it is strongly extendably locally reflexive.
\end{enumerate}
\end{prop}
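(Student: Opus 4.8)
The plan is to read this proposition as the specialization of Oja's \cite[Lemma 3.3]{O} and \cite[Theorem 3.7(b)]{O} to the Banach space $Z := \free(X)$, under the identification $Z^* = \textup{Lip}_0(X)$, and to check that the two standing hypotheses on $\mathcal{A}$ (namely $\mathcal{A}=\mathcal{A}^{dd}$, and $T^{**}\in\mathcal{A}(Z^{**},Z)$ whenever $T\in\mathcal{A}(Z,Z)$) are all that the arguments actually consume. Write $j_Z:Z\to Z^{**}$ for the canonical embedding and recall the identity $T^{***}|_{Z^*}=T^*$. I would open with the remark, used already in the proof of Theorem \ref{thm:X-Lip-BAP}, that for the bounded convex set $\mathcal{A}_c^\lambda(Z^*,Z^*)$ the closures in the weak operator topology, the strong operator topology, and $\tau_c$ all coincide; this is what lets me move between pointwise-weak and $\tau_c$ statements by passing to convex combinations.

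For \textup{(a)} $\Rightarrow$ \textup{(b)}, I would first recover that $\free(X)$ has the $\mathcal{A}$-$\lambda$-BAP exactly as in the implication \textup{(d)} $\Rightarrow$ \textup{(c)} of Theorem \ref{thm:X-Lip-BAP}: if $T_\alpha\in\mathcal{A}^\lambda(Z,Z)$ satisfy $T_\alpha^*\to\Id_{Z^*}$ in $\tau_c$, then $\langle T_\alpha z,z^*\rangle\to\langle z,z^*\rangle$ for all $z\in Z$, $z^*\in Z^*$, so $T_\alpha\to\Id_Z$ in the weak operator topology, and convex combinations yield the $\tau_c$-approximation. For strong extendable local reflexivity, fix finite-dimensional $E\subseteq Z^{**}$, $F\subseteq Z^*$ and $\varepsilon>0$, and choose $T_\alpha$ with $\sup_{f\in B_F}\|T_\alpha^* f-f\|$ small. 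Here the hypothesis $T^{**}\in\mathcal{A}(Z^{**},Z)$ is decisive: it forces $R_0:=T_\alpha^{**}$ to map $Z^{**}$ into $Z$, so $R_0(E)\subseteq Z$, while $R_0^*|_{Z^*}=T_\alpha^*$ maps $Z^*$ into $Z^*$ and $\|R_0\|=\|T_\alpha\|\le\lambda$. Since $\langle R_0 z^{**},f\rangle=\langle z^{**},T_\alpha^* f\rangle$ differs from $\langle z^{**},f\rangle$ by at most $\|z^{**}\|\,\|T_\alpha^* f-f\|$, the operator $R_0$ fixes the pairing on $E\times F$ up to arbitrarily small error; a standard finite-rank perturbation $P=\sum_i \phi_i\otimes z_i$ with $\phi_i\in Z^*$ and $z_i\in Z$ (so that $P(Z^{**})\subseteq Z$ and $P^*(Z^*)\subseteq Z^*$) corrects the defect exactly while keeping $\|R_0+P\|\le\lambda+\varepsilon$, giving strong extendable local reflexivity with the same constant.

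The substantial direction is \textup{(b)} $\Rightarrow$ \textup{(a)}, where the $\mathcal{A}$-BAP of $\free(X)$ by itself only produces conjugate operators converging weak-star (this is exactly Theorem \ref{thm:equiv3}\textup{(d)}), and one must upgrade this to genuine $\tau_c$-approximation. The device is to compose a bounded-approximation operator with a local-reflexivity operator. Given finite-dimensional $E\subseteq Z^{**}$, $F\subseteq Z^*$ and $\varepsilon>0$, I would use strong extendable local reflexivity to produce $R\in\mathcal{L}(Z^{**},Z^{**})$ with $\|R\|\le\lambda_2+\varepsilon$, $R(E)\subseteq Z$, $R^*(Z^*)\subseteq Z^*$ and $\langle Rz^{**},f\rangle=\langle z^{**},f\rangle$ on $E\times F$. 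Since $R(B_E)$ is compact in $Z$, the $\mathcal{A}$-$\lambda_1$-BAP of $\free(X)$ yields $V\in\mathcal{A}^{\lambda_1}(Z,Z)$ with $\sup_{z^{**}\in B_E}\|V(Rz^{**})-Rz^{**}\|$ small. Setting $T:=V^{**}Rj_Z$, the hypothesis $V^{**}\in\mathcal{A}(Z^{**},Z)$ together with the ideal property gives $T\in\mathcal{A}^{\lambda_1(\lambda_2+\varepsilon)}(Z,Z)$, while $T^{**}=V^{**}R$ on $Z^{**}$; for $z^{**}\in E$ one has $Rz^{**}\in Z$, hence $T^{**}z^{**}=V(Rz^{**})$ and, by the pairing identity,
\[
\langle T^{**}z^{**}-z^{**},\,f\rangle=\langle V(Rz^{**})-Rz^{**},\,f\rangle,
\]
which is small uniformly for $z^{**}\in B_E$, $f\in B_F$. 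Letting $(E,F,\varepsilon)$ run through a directed family, the resulting net satisfies $\langle z^{**},T_\gamma^* f-f\rangle\to0$ for every $z^{**}\in Z^{**}$, $f\in Z^*$, i.e. $T_\gamma^* f\to f$ \emph{weakly} in $Z^*$; a final passage to convex combinations converts this weak convergence into $\tau_c$-convergence of uniformly bounded conjugate $\mathcal{A}$-operators, so $\Id_{Z^*}\in\overline{\mathcal{A}_c^{\mu}(Z^*,Z^*)}^{\tau_c}$ with $\mu=\lambda_1(\lambda_2+1)$. The condition $\mathcal{A}=\mathcal{A}^{dd}$ enters, following Oja, to identify these conjugate operators correctly at the level of biduals.

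The step I expect to be the main obstacle is exactly this weak-star-to-weak upgrade in \textup{(b)} $\Rightarrow$ \textup{(a)}: the $\mathcal{A}$-BAP controls $T^{**}$ only on $Z\subseteq Z^{**}$, whereas $\tau_c$-approximation of $\Id_{Z^*}$ demands control of $\langle T^{**}z^{**}-z^{**},f\rangle$ for \emph{all} $z^{**}\in Z^{**}$, and bridging this is precisely the function of strong extendable local reflexivity. The delicate point is that one cannot bound $\|T^* f-f\|$ for a single $V$ directly (that quantity is governed by the full ball $B_Z$ and is not available from a compact-set approximation); instead one must arrange the composition $T=V^{**}Rj_Z$ to lie in $\mathcal{A}$ with the pairing identity intact, and only then extract norm approximation in the limit via convexity. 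A secondary technical nuisance is the finite-rank correction making the local-reflexivity pairing exact in \textup{(a)} $\Rightarrow$ \textup{(b)}, which must be performed so as to preserve both $P(Z^{**})\subseteq Z$ and $P^*(Z^*)\subseteq Z^*$.
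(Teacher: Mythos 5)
Your proposal is correct and takes essentially the same route as the paper: the paper's own proof of Proposition \ref{prop:Lip(X)-F(X)-BAP} consists precisely of the observation that Oja's arguments for \cite[Lemma 3.3]{O} and \cite[Theorem 3.7(b)]{O} go through once the hypothesis ``$T \in \mathcal{A}(Z,Z)$ implies $T^{**} \in \mathcal{A}(Z^{**},Z)$'' replaces $\mathcal{A} \subseteq \mathcal{W}$ (and regularity), and your two paragraphs are exactly those two arguments reconstructed for $Z = \free(X)$, $Z^* = \textup{Lip}_0(X)$.

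One caveat: in the direction \textup{(b)} $\Rightarrow$ \textup{(a)} you assert ``$T^{**}=V^{**}R$ on $Z^{**}$'' as if it were automatic, and it is not: for $T=V^{**}Rj_Z$ this identity can fail for a general $R \in \mathcal{L}(Z^{**},Z^{**})$. It holds here exactly because of the strong-ELR condition $R^*(Z^*)\subseteq Z^*$, which you list among the properties of $R$ but never visibly invoke. Indeed, $R^*(Z^*)\subseteq Z^*$ is equivalent to $R$ being weak-star-to-weak-star continuous; since $V^{**}$ is weak-star-to-weak-star continuous (being an adjoint), the composition $V^{**}R$ is a weak-star-to-weak-star continuous operator on $Z^{**}$ agreeing with $T$ on $Z$, and uniqueness of such extensions gives $T^{**}=V^{**}R$. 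Spelling this out closes the only real gap in the write-up; without it, the pairing computation that follows has no justification. A second, minor point: your closing remark that $\mathcal{A}=\mathcal{A}^{dd}$ serves ``to identify conjugate operators at the level of biduals'' is vague, and in fact your argument only ever uses the inclusion $\mathcal{A}\subseteq\mathcal{A}^{dd}$, which is already implied by the displayed hypothesis $T^{**}\in\mathcal{A}(Z^{**},Z)$ together with the ideal property; the full equality is carried along from Oja's statement rather than consumed by your reconstruction.
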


\begin{proof}
(a) $\Rightarrow$ (b). From the comment above, it remains to show that $\mathfrak{F}(X)$ is strongly extendably locally reflexive. Indeed, this follows from the same argument in \cite[Lemma 3.3]{O} as the assumption $\mathcal{A} \subseteq \mathcal{W}$ given there is only used to guarantee $T^{**} (Z^{**}) \subseteq Z$ whenever $T \in \mathcal{A}(Z,Z)$. Consequently, $T^{***} (z^{***}) \in Z^{***}$ is a weak-star continuous linear functional on $Z^{**}$; hence $T^{***} (z^{***})$ is actually an element of $Z^*$ for every $z^{***} \in Z^{***}$. In other words, $T^{***} (Z^{***}) \subseteq Z^*$. 

(b) $\Rightarrow$ (a). This is a consequence of the proof in \cite[Theorem 3.7.(b)]{O}, as the hypothesis $T \in \mathcal{A}(Z,Z)$ implies $T^{**} \in \mathcal{A}(Z^{**},Z)$ may replace the roles of $\mathcal{A} \subseteq \mathcal{W}$ and the regularity of $\mathcal{A}$.
\end{proof}

If we restrict it to the case $\mathcal{A}=\mathcal{F}$, the result of Johnson \cite{J} tells that $X^*$ has the $\lambda$-BAP if and only if $X^*$ has the $\lambda$-BAP with conjugate operators. Having this in mind and using the fact that $X^*$ is one-complemented in $\textup{Lip}_0(X)$ \cite{L}, we obtain the following remark.

\begin{rem}
Let $X$ be a Banach space.
\begin{enumerate}
\item[\textup{(a)}] $\textup{Lip}_0(X)$ has the BAP.
\item[\textup{(b)}] $\mathfrak{F}(X)$ has the BAP and it is strongly extendably locally reflexive.
\item[\textup{(c)}] $X^*$ has the BAP.
\end{enumerate}
Then, we have \textup{(a)} $\Leftrightarrow$ \textup{(b)} $\Rightarrow$ \textup{(c)}.
\end{rem}

We finish the section with a yet another characterization of the $\textup{Lip}$-$\mathcal{A}$-BAP on $X$ and $\mathfrak{F}(X)$ which extends the idea of Grothendieck's characterization of the AP in terms of universal domain and range spaces.

\begin{prop}
Let $\mathcal A$ be an operator ideal. Then, the following statements are equivalent for a Banach space $X$.
\begin{enumerate}
\item[\textup{(a)}] $X$ has the $\textup{Lip}$-$\mathcal{A}$-$\lambda$-BAP.
\item[\textup{(b)}] $\mathcal{L}^1(X,Y) \subset \overline{\textup{Lip}_\mathcal{A}^\lambda(X,Y)}^{\tau_c}$ for every Banach space $Y$.
\item[\textup{(c)}] $\textup{Lip}_0^1(Y,X) \subset \overline{\textup{Lip}_\mathcal{A}^\lambda(Y,X)}^{\tau_c}$ for every Banach space $Y$.
\end{enumerate}
Moreover, the following statements are equivalent, and any of the following items implies the preceding ones.
\begin{enumerate}
\item[\textup{(d)}] $\mathfrak{F}(X)$ has the $\textup{Lip}$-$\mathcal{A}$-$\lambda$-BAP.
\item[\textup{(e)}] $\textup{Lip}_0^1(X,Y) \subset \overline{\textup{Lip}_\mathcal{A}^\lambda(X,Y)}^{\tau_c}$ for every Banach space $Y$.
\end{enumerate}
\end{prop}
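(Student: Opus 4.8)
The plan is to read this as a Grothendieck-type ``universal domain/range'' characterization, in which every implication is obtained by composing a $\tau_c$-approximant of an identity map (or of $\delta_X$) with a fixed linear or Lipschitz map and checking that the composition remains in $\textup{Lip}_\mathcal{A}^\lambda$. The only substantive ingredients are the ideal axioms of Definition \ref{defn:Lip-ideal}, which keep such compositions inside $\textup{Lip}_\mathcal{A}$; the submultiplicativity $\|g\circ h\|_{\textup{Lip}} \le \|g\|_{\textup{Lip}}\|h\|_{\textup{Lip}}$ of the Lipschitz norm, which controls the constant $\lambda$; and Theorem \ref{thm:equiv3}, which handles the items involving $\mathfrak{F}(X)$.

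For the first block I would establish (a) $\Rightarrow$ (b), (b) $\Rightarrow$ (a), (a) $\Rightarrow$ (c) and (c) $\Rightarrow$ (a). The two reverse arrows are immediate specializations: choosing $Y=X$ and $T=\Id_X \in \mathcal{L}^1(X,X)$ turns (b) into the definition of the $\textup{Lip}$-$\mathcal{A}$-$\lambda$-BAP, and choosing $Y=X$ and $f=\Id_X \in \textup{Lip}_0^1(X,X)$ does the same for (c). For (a) $\Rightarrow$ (b), given $T \in \mathcal{L}^1(X,Y)$, a compact set $K \subseteq X$ and $\varepsilon>0$, I would take $g \in \textup{Lip}_\mathcal{A}^\lambda(X,X)$ with $\sup_{x\in K}\|g(x)-x\|<\varepsilon$ and form $Tg$; axiom (iii) (with $R=T$ and inner factor $\Id_X$) puts $Tg$ in $\textup{Lip}_\mathcal{A}(X,Y)$ with $\|Tg\|_{\textup{Lip}} \le \|T\|\lambda \le \lambda$ and $\sup_{x\in K}\|Tg(x)-Tx\|\le\|T\|\varepsilon\le\varepsilon$. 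For (a) $\Rightarrow$ (c), given $f \in \textup{Lip}_0^1(Y,X)$ I would instead approximate the identity on the \emph{compact image} $f(K) \subseteq X$: picking $g \in \textup{Lip}_\mathcal{A}^\lambda(X,X)$ with $\sup_{z\in f(K)}\|g(z)-z\|<\varepsilon$ and forming $g\circ f$, axiom (iii) gives $g\circ f \in \textup{Lip}_\mathcal{A}(Y,X)$, $\|g\circ f\|_{\textup{Lip}}\le\lambda$, and $\sup_{y\in K}\|g(f(y))-f(y)\|<\varepsilon$.

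For the second block I would exploit that (d) and (e) are essentially the first-block statements applied to $\mathfrak{F}(X)$ in place of $X$ and then transported across $\delta_X$. For (d) $\Rightarrow$ (e): since (d) means $\mathfrak{F}(X)$ has the $\textup{Lip}$-$\mathcal{A}$-$\lambda$-BAP, the already-proven equivalence (a) $\Leftrightarrow$ (b) applied to $\mathfrak{F}(X)$ gives $\mathcal{L}^1(\mathfrak{F}(X),Y) \subseteq \overline{\textup{Lip}_\mathcal{A}^\lambda(\mathfrak{F}(X),Y)}^{\tau_c}$. Given $f \in \textup{Lip}_0^1(X,Y)$ one has $L_f \in \mathcal{L}^1(\mathfrak{F}(X),Y)$; approximating $L_f$ on the compact set $\delta_X(K)$ by some $F \in \textup{Lip}_\mathcal{A}^\lambda(\mathfrak{F}(X),Y)$ and precomposing with $\delta_X$ yields $F\delta_X \in \textup{Lip}_\mathcal{A}^\lambda(X,Y)$ (axiom (iii), using $\|\delta_X\|_{\textup{Lip}}=1$) with $\sup_{x\in K}\|F\delta_X(x)-f(x)\| = \sup_{x\in K}\|F\delta_X(x)-L_f\delta_X(x)\|<\varepsilon$, which is (e). Conversely, (e) $\Rightarrow$ (d) is a single specialization: taking $Y=\mathfrak{F}(X)$ and $f=\delta_X \in \textup{Lip}_0^1(X,\mathfrak{F}(X))$ yields $\delta_X \in \overline{\textup{Lip}_\mathcal{A}^\lambda(X,\mathfrak{F}(X))}^{\tau_c}$, which is precisely condition (c) of Theorem \ref{thm:equiv3} and hence forces $\mathfrak{F}(X)$ to have the $\textup{Lip}$-$\mathcal{A}$-$\lambda$-BAP. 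The remaining implication (d) $\Rightarrow$ (a) is exactly the final assertion of Theorem \ref{thm:equiv3}.

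The difficulty here is bookkeeping rather than depth: at each step one must select the correct instance of axiom (iii) (identifying $R$ and the inner Lipschitz factor) to certify membership in $\textup{Lip}_\mathcal{A}$, and track that the Lipschitz norm never exceeds $\lambda$. The only conceptual point to isolate is that, in the second block, the $\tau_c$-approximation on $\mathfrak{F}(X)$ need only be tested against compact sets of the shape $\delta_X(K)$ before being pulled back to $X$, and that $\delta_X$ is itself an admissible unit-norm test map, so that reducing (e) $\Rightarrow$ (d) to condition (c) of Theorem \ref{thm:equiv3} is legitimate.
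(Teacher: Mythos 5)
Your proposal is correct and follows essentially the same route as the paper: the converse implications are the identical trivial specializations ($Y=X$ with $T=\Id_X$ or $f=\Id_X$, and $Y=\mathfrak{F}(X)$ with $f=\delta_X$, the latter reducing (e) $\Rightarrow$ (d) to condition (c) of Theorem \ref{thm:equiv3}), the forward implications are the same one-line composition arguments with the same norm bookkeeping, and (d) $\Rightarrow$ (a) is quoted from the final statement of Theorem \ref{thm:equiv3}, exactly as in the paper. The only (harmless) deviation is in (d) $\Rightarrow$ (e): the paper approximates $\delta_X$ by some $g \in \textup{Lip}_{\mathcal A}^{\lambda}(X,\mathfrak{F}(X))$ via Theorem \ref{thm:equiv3}(c) and takes $L_f g$, whereas you approximate $L_f \in \mathcal{L}^1(\mathfrak{F}(X),Y)$ on $\delta_X(K)$ by some $F \in \textup{Lip}_{\mathcal A}^{\lambda}(\mathfrak{F}(X),Y)$ --- using your already-proved (a) $\Rightarrow$ (b) applied to $\mathfrak{F}(X)$ --- and take $F\delta_X$; both compositions land in $\textup{Lip}_{\mathcal A}^{\lambda}(X,Y)$ (your explicit appeal to axiom (iii), i.e.\ to the composition stability of $\textup{Lip}_{\mathcal A}$, is a point the paper leaves implicit) and yield the same conclusion.
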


\begin{proof}
(b) $\Rightarrow$ (a) and (c) $\Rightarrow$ (a) follow directly from the particular case when $Y=X$.

(a) $\Rightarrow$ (b). Given $\varepsilon>0$, a compact set $K \subset X$ and a Banach space $Y$, let $T \in \mathcal{L}^1(X,Y)$. As $X$ has the $\textup{Lip}$-$\mathcal{A}$-$\lambda$-BAP, there exists $f \in \textup{Lip}_\mathcal{A}^\lambda(X,X)$ such that $\sup_{x \in K} \|f(x) - x\|<\varepsilon$. Then, $\sup_{x \in K} \|Tf(x)-Tx\|<\varepsilon$ and we have that $Tf \in \textup{Lip}_\mathcal{A}^\lambda(X,Y)$.

(a) $\Rightarrow$ (c). Given $\varepsilon>0$, a compact set $K \subset Y$ and a Banach space $Y$, let $f \in \textup{Lip}_0^1(Y,X)$. Since $f(K)$ is compact, there is $g \in \textup{Lip}_\mathcal{A}^\lambda(X,X)$ such that $\sup_{x \in K} \|g(f(x))-f(x)\|<\varepsilon$. The conclusion follows from $g f \in \textup{Lip}_\mathcal{A}^\lambda(Y,X)$.

(e) $\Rightarrow$ (d) is clear since $\delta_X \in \textup{Lip}_0^1(X,\mathfrak{F}(X)) \subset \overline{\textup{Lip}_\mathcal{A}^\lambda(X,\mathfrak{F}(X))}^{\tau_c}$, see Theorem \ref{thm:equiv3}.

(d) $\Rightarrow$ (e). Given $\varepsilon>0$, a compact set $K \subset X$ and a Banach space $Y$, let $f \in \textup{Lip}_0^1(X,Y)$. By Theorem \ref{thm:equiv3}, there exists $g \in \textup{Lip}_\mathcal{A}^\lambda(X,\mathfrak{F}(X))$ such that $\sup_{x \in K} \|g(x)-\delta_X(x)\|<\varepsilon$. Thus the operator $L_f \in \mathcal{L}^1(\mathfrak{F}(X),Y)$ corresponding to $f$ satisfies that $\sup_{x \in K} \|L_f g(x) - L_f \delta_X(x)\| < \varepsilon$, and note that $L_f g \in \textup{Lip}_\mathcal{A}^\lambda (X,\mathfrak{F}(X))$.

Finally, the last statement of Theorem \ref{thm:equiv3} gives the rest implication.
\end{proof}

\begin{rem}
It is not known that whether an analogue of (a) $\Leftrightarrow$ (c) also holds for (d) in terms of universal domain spaces. Using a different language, it is unknown whether (b) and (e) are equivalent for an arbitrary operator ideal $\mathcal{A}$.
\end{rem}

\section{The three-space problem for the $\textup{Lip}$-$\mathcal{A}$-BAP}\label{section:TSP}

In this section, we would like to consider the three-space problem for the $\textup{Lip}$-$\mathcal{A}$-BAP. First, we start with the case when a subspace is complemented.  

\begin{prop}
Let $X$ be a Banach space and $M$ be a complemented subspace of $X$. If $X$ has the $\textup{Lip}$-$\mathcal{A}$-BAP, then so do $M$ and $X/M$.
\end{prop}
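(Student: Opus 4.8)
The plan is to use the fact that complementation lets us transport an approximating net on $X$ over to $M$ and to $X/M$ by pre- and post-composing with suitable bounded linear maps, relying on axiom (iii) of Definition \ref{defn:Lip-ideal} to remain inside $\textup{Lip}_{\mathcal{A}}$. To set up, write $P \in \mathcal{L}(X,X)$ for a bounded projection with range $M$ and factor it as $P = \iota Q$, where $\iota \colon M \hookrightarrow X$ is the isometric inclusion and $Q \colon X \to M$ is the corestriction of $P$; then $Q\iota = \Id_M$ and $\|Q\| = \|P\|$. For the quotient, let $\pi \colon X \to X/M$ be the canonical quotient map and fix a bounded linear lifting $j \colon X/M \to X$ with $\pi j = \Id_{X/M}$, which exists precisely because $M$ is complemented (take $j$ to be the inverse of $\pi$ restricted to $\ker P$). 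Since $X$ has the $\textup{Lip}$-$\mathcal{A}$-BAP, there are $\lambda \geq 1$ and a net $(f_\alpha)$ in $\textup{Lip}_{\mathcal{A}}^{\lambda}(X,X)$ with $f_\alpha \to \Id_X$ in $\tau_c$.

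For $M$, I would set $g_\alpha := Q f_\alpha \iota$. Applying axiom (iii) with $\iota \in \textup{Lip}_0(M,X)$, $f_\alpha \in \textup{Lip}_{\mathcal{A}}(X,X)$ and $Q \in \mathcal{L}(X,M)$ yields $g_\alpha \in \textup{Lip}_{\mathcal{A}}(M,M)$, with $\|g_\alpha\|_{\textup{Lip}} \leq \|Q\|\,\lambda = \|P\|\lambda$. For a compact set $K \subseteq M$ and $x \in K$ one has $g_\alpha(x) - x = Q\bigl(f_\alpha(x) - x\bigr)$, so $\sup_{x\in K}\|g_\alpha(x)-x\| \leq \|P\|\sup_{x\in K}\|f_\alpha(x)-x\| \to 0$. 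Hence $\Id_M \in \overline{\textup{Lip}_{\mathcal{A}}^{\|P\|\lambda}(M,M)}^{\tau_c}$, so $M$ has the $\textup{Lip}$-$\mathcal{A}$-BAP.

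For $X/M$, I would set $h_\alpha := \pi f_\alpha j$. By axiom (iii) again (now with $j \in \textup{Lip}_0(X/M,X)$ and $\pi \in \mathcal{L}(X,X/M)$) we obtain $h_\alpha \in \textup{Lip}_{\mathcal{A}}(X/M,X/M)$ with $\|h_\alpha\|_{\textup{Lip}} \leq \|\pi\|\,\lambda\,\|j\| \leq \|j\|\lambda$. For a compact set $K \subseteq X/M$, the image $j(K)$ is compact in $X$ and $h_\alpha(\xi) - \xi = \pi\bigl(f_\alpha(j\xi) - j\xi\bigr)$ for $\xi \in K$, whence $\sup_{\xi\in K}\|h_\alpha(\xi)-\xi\| \leq \sup_{\xi \in K}\|f_\alpha(j\xi) - j\xi\| \to 0$. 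Thus $\Id_{X/M} \in \overline{\textup{Lip}_{\mathcal{A}}^{\|j\|\lambda}(X/M,X/M)}^{\tau_c}$, and $X/M$ has the $\textup{Lip}$-$\mathcal{A}$-BAP.

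I do not expect a genuine obstacle in this complemented case: the argument splits cleanly, and the only things to verify are that the two compositions stay in $\textup{Lip}_{\mathcal{A}}$ (which is exactly axiom (iii)) and that composing with the fixed bounded linear maps $Q,\iota$ (resp. $\pi,j$) preserves $\tau_c$-convergence while inflating the Lipschitz bound only by the controllable factors $\|P\|$ (resp. $\|j\|$). The constant $\lambda$ is not preserved, but this is harmless since the statement concerns the unquantified $\textup{Lip}$-$\mathcal{A}$-BAP. The genuinely delicate three-space questions arise only when $M$ is merely locally complemented rather than complemented, and those are treated separately.
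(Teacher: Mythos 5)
Your proposal is correct, and its core mechanism is the paper's: compress a $\tau_c$-approximating net $(f_\alpha) \subset \textup{Lip}_{\mathcal{A}}^{\lambda}(X,X)$ by fixed bounded linear maps, using axiom (iii) of Definition \ref{defn:Lip-ideal} to stay in $\textup{Lip}_{\mathcal{A}}$ and absorbing the harmless inflation of the constant. For $M$ your argument is literally the paper's ($P f_\alpha \iota$ in their notation, your $Q f_\alpha \iota$). For $X/M$ the two routes differ only in packaging: you approximate $\Id_{X/M}$ directly by $\pi f_\alpha j$, where $j$ is the bounded linear lifting inverse to $\pi$ restricted to the complementary subspace, whereas the paper applies its subspace case to the complemented subspace $Q(X)$ with $Q := \Id_X - \iota P$, and then transfers the property through the isomorphism $X/M \cong Q(X)$. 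These are mathematically the same: since $\pi \iota = 0$, one has $\pi f_\alpha j = \pi Q f_\alpha j$, so your net is exactly the paper's net on $Q(X)$ conjugated by the isomorphism $\pi|_{Q(X)}$. Your version is marginally more self-contained, as it avoids the paper's implicit appeal to the (easy, but unproved there) isomorphism invariance of the $\textup{Lip}$-$\mathcal{A}$-BAP; the paper's version is shorter because it reuses the subspace case verbatim.
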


\begin{proof}
Suppose that $X$ has the $\textup{Lip}$-$\mathcal{A}$-$\lambda$-BAP for some $\lambda \geq 1$. Let $(f_\alpha) \subset \textup{Lip}_\mathcal{A}^\lambda (X,X)$ such that $f_\alpha$ converges to $\Id_X$ in the $\tau_c$-topology. Let $K \subset M$ be a compact set. Let $P: X \to M$ and $\iota: M \to X$ be canonical projection and inclusion, respectively. Consider $Pf_\alpha \iota: M \to M$, which belongs to $\textup{Lip}_\mathcal{A}^{\lambda'}(M,M)$ for some $\lambda ' \geq 1$. Take $\alpha_0$ such that $\sup_{x \in K} \|(f_\alpha - \Id_X)(x)\|<\dfrac{\varepsilon}{\|P\|}$ whenever $\alpha \geq \alpha_0$. Thus,
$$
\sup_{x\in K} \|(Pf_\alpha\iota - id_M)(x)\| \leq \|P\| \sup_{x \in K} \|(f_\alpha - \Id_X)(x)\|<\varepsilon
$$
for any $\alpha \geq \alpha_0$; so $M$ has the $\textup{Lip}$-$\mathcal{A}$-BAP.

For $X/M$, note that $M$ is the kernel of a projection $Q := \Id_X - \iota P$. Note that $Q(X)$ is a complemented subspace of $X$; hence $Q(X)$ has the $\textup{Lip}$-$\mathcal{A}$-BAP by the above argument. It follows that $X/M$ has the $\textup{Lip}$-$\mathcal{A}$-BAP as it is isomorphic to $Q(X)$.
\end{proof}

On the other hand, the following can be observed easily.

\begin{prop}\label{prop:sum}
Let $X$ and $Y$ be Banach spaces. If $X$ has the \textup{Lip}-$\mathcal{A}$-BAP and $Y$ has the \textup{Lip}-$\mathcal{A}$-BAP, then $X \oplus_\infty Y$ has the \textup{Lip}-$\mathcal{A}$-BAP.
\end{prop}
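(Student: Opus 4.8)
The plan is to exploit the $\tau_c$-approximability characterization directly on $X \oplus_\infty Y$, building an approximating Lipschitz map on the sum out of the approximating maps on each factor. Suppose $X$ has the $\textup{Lip}$-$\mathcal{A}$-$\lambda_1$-BAP and $Y$ has the $\textup{Lip}$-$\mathcal{A}$-$\lambda_2$-BAP. Given a compact set $K \subseteq X \oplus_\infty Y$ and $\varepsilon > 0$, I would first project $K$ onto each coordinate: writing $P_X, P_Y$ for the canonical coordinate projections and $\iota_X, \iota_Y$ for the inclusions, the sets $P_X(K) \subseteq X$ and $P_Y(K) \subseteq Y$ are compact. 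Applying the hypotheses, I obtain $f \in \textup{Lip}_{\mathcal A}^{\lambda_1}(X,X)$ and $g \in \textup{Lip}_{\mathcal A}^{\lambda_2}(Y,Y)$ that approximate the respective identities to within $\varepsilon$ uniformly on $P_X(K)$ and $P_Y(K)$.

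The natural candidate for the approximant on the sum is the map $h := \iota_X f P_X + \iota_Y g P_Y$, which sends $(x,y) \mapsto (f(x), g(y))$. The key steps are then to verify three things: that $h$ lies in $\textup{Lip}_{\mathcal A}(X\oplus_\infty Y, X\oplus_\infty Y)$, that its Lipschitz norm is controlled, and that $\sup_{(x,y)\in K}\|h(x,y)-(x,y)\|_\infty < \varepsilon$. The approximation estimate is immediate from the $\infty$-sum structure, since
$$
\|h(x,y)-(x,y)\|_\infty = \max\{\|f(x)-x\|,\, \|g(y)-y\|\} < \varepsilon
$$
whenever $(x,y) \in K$, because $x \in P_X(K)$ and $y \in P_Y(K)$. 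The membership $h \in \textup{Lip}_{\mathcal A}$ follows from the ideal axioms (i) and (iii) in Definition \ref{defn:Lip-ideal}: each summand $\iota_X f P_X$ is a composition $R \circ f' \circ Q$ with $Q = P_X \in \mathcal{L}$, $f' = f \in \textup{Lip}_{\mathcal A}$, and $R = \iota_X \in \mathcal{L}$, hence lies in the ideal, and the ideal is closed under sums by (i).

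The one point requiring genuine care is the Lipschitz norm bound, which is why the statement yields only the $\textup{Lip}$-$\mathcal{A}$-BAP rather than a clean $\lambda$-version. For the $\infty$-norm on the sum one computes
$$
\|h(x,y)-h(x',y')\|_\infty = \max\{\|f(x)-f(x')\|,\, \|g(y)-g(y')\|\}
\leq \max\{\lambda_1, \lambda_2\}\,\|(x,y)-(x',y')\|_\infty,
$$
so that $\|h\|_{\textup{Lip}} \leq \max\{\lambda_1, \lambda_2\}$; thus $h \in \textup{Lip}_{\mathcal A}^{\lambda}(X\oplus_\infty Y, X\oplus_\infty Y)$ with $\lambda = \max\{\lambda_1,\lambda_2\}$. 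This last inequality is exactly where the choice of the $\infty$-sum is essential: for a general $p$-sum the Lipschitz norm of a diagonal map need not be the maximum of the factor norms, so the argument would not preserve the quantitative constant and the clean diagonal estimate above would fail. Since $K$ and $\varepsilon$ were arbitrary, $\Id_{X\oplus_\infty Y}$ lies in the $\tau_c$-closure of $\textup{Lip}_{\mathcal A}^{\lambda}(X\oplus_\infty Y, X\oplus_\infty Y)$, which is the desired conclusion.
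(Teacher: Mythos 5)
Your proof is correct, and since the paper states this proposition without proof (it is introduced as something that ``can be observed easily''), your diagonal construction $h(x,y)=(f(x),g(y))$ --- justified by the ideal axioms of Definition \ref{defn:Lip-ideal}, which the composition ideal $\textup{Lip}_{\mathcal{A}}$ satisfies --- is exactly the intended argument. Note moreover that your own estimate proves the clean quantitative version you claim is unavailable: if $X$ has the \textup{Lip}-$\mathcal{A}$-$\lambda_1$-BAP and $Y$ has the \textup{Lip}-$\mathcal{A}$-$\lambda_2$-BAP, then $X\oplus_\infty Y$ has the \textup{Lip}-$\mathcal{A}$-$\max\{\lambda_1,\lambda_2\}$-BAP.

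One side claim is false, although it costs you nothing for the statement at hand: the diagonal estimate is not special to the $\infty$-sum. For any $1\le p<\infty$ one has
\[
\|h(x,y)-h(x',y')\|_p=\bigl(\|f(x)-f(x')\|^p+\|g(y)-g(y')\|^p\bigr)^{1/p}\le\max\{\lambda_1,\lambda_2\}\,\bigl(\|x-x'\|^p+\|y-y'\|^p\bigr)^{1/p},
\]
so the diagonal map has Lipschitz norm at most $\max\{\lambda_1,\lambda_2\}$ on the $p$-sum as well, and the identical argument yields the \textup{Lip}-$\mathcal{A}$-$\max\{\lambda_1,\lambda_2\}$-BAP for $X\oplus_p Y$. (Alternatively, the unquantified \textup{Lip}-$\mathcal{A}$-BAP is an isomorphic property --- conjugate an approximating net by an isomorphism and invoke axiom (iii) --- so it passes between the mutually equivalent norms $\oplus_p$ in any case; only the exact constant depends on the choice of norm.)
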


As $X$ is isomorphic to $M \oplus_\infty X/M$ for a complemented subspace $M \subseteq X$, we are able to obtain the converse of the first stability result thanks to Proposition \ref{prop:sum}.

\begin{prop}
Let $X$ be a Banach space and $M$ be a complemented subspace of $X$. If $M$ and $X/M$ both have the \textup{Lip}-$\mathcal{A}$-BAP, then so does $X$.
\end{prop}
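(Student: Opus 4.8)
The plan is to reduce everything to two facts already available: Proposition \ref{prop:sum}, which handles $\oplus_\infty$-sums, and the isomorphic invariance of the \textup{Lip}-$\mathcal{A}$-BAP. Since $M$ is complemented in $X$, the space $X$ is linearly isomorphic to $M \oplus_\infty X/M$ (a complemented subspace is a direct summand, and all direct-sum norms on $M \oplus X/M$ are equivalent, so we may take the $\oplus_\infty$-norm); this is exactly the identification recorded in the sentence preceding the statement. By hypothesis $M$ and $X/M$ both have the \textup{Lip}-$\mathcal{A}$-BAP, so Proposition \ref{prop:sum} shows $M \oplus_\infty X/M$ has the \textup{Lip}-$\mathcal{A}$-BAP. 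It then remains only to carry this property across the isomorphism back to $X$, so the whole proof rests on showing the \textup{Lip}-$\mathcal{A}$-BAP is preserved under linear isomorphisms.

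To set up that invariance, I would take a Banach space $Z$ with the \textup{Lip}-$\mathcal{A}$-$\mu$-BAP and a linear isomorphism $\Phi : Z \to X$, and aim to show $X$ has the \textup{Lip}-$\mathcal{A}$-BAP. Choosing a net $(g_\alpha) \subset \textup{Lip}_{\mathcal{A}}^{\mu}(Z,Z)$ converging to $\Id_Z$ in $\tau_c$, I would conjugate and set $f_\alpha := \Phi \, g_\alpha \, \Phi^{-1} : X \to X$. The crucial bookkeeping is that $\Phi^{-1} \in \textup{Lip}_0(X,Z)$, $g_\alpha \in \textup{Lip}_{\mathcal{A}}(Z,Z)$ and $\Phi \in \mathcal{L}(Z,X)$, so axiom (iii) of Definition \ref{defn:Lip-ideal}—with the linear operator $\Phi$ sitting on the outside—yields $f_\alpha \in \textup{Lip}_{\mathcal{A}}(X,X)$, with Lipschitz norm controlled by $\|f_\alpha\|_{\textup{Lip}} \leq \|\Phi\| \, \|\Phi^{-1}\| \, \mu =: \lambda$.

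For the $\tau_c$-convergence I would fix a compact set $K \subset X$ and $\varepsilon > 0$, use that $\Phi^{-1}(K)$ is compact in $Z$, and estimate, for large $\alpha$,
\[
\sup_{x \in K} \| f_\alpha(x) - x \| = \sup_{x \in K} \| \Phi \bigl(g_\alpha \Phi^{-1}(x)\bigr) - \Phi \bigl(\Phi^{-1}(x)\bigr) \| \leq \|\Phi\| \sup_{z \in \Phi^{-1}(K)} \| g_\alpha(z) - z \| < \varepsilon .
\]
This gives $f_\alpha \to \Id_X$ in $\tau_c$ with the uniform bound $\lambda$, so $X$ has the \textup{Lip}-$\mathcal{A}$-$\lambda$-BAP. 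Applying this with $Z = M \oplus_\infty X/M$ and $\Phi$ the isomorphism onto $X$ finishes the proof.

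I do not expect a genuine obstacle here: the entire argument is formal, driven by the Lipschitz operator ideal axioms and Proposition \ref{prop:sum}. The only two points requiring care are keeping the composition order $\Phi \circ g_\alpha \circ \Phi^{-1}$ aligned with axiom (iii) so that the membership in $\textup{Lip}_{\mathcal{A}}$ is legitimate, and noting that the approximation constant degrades by the factor $\|\Phi\| \, \|\Phi^{-1}\|$—which is precisely why the conclusion is stated for the \textup{Lip}-$\mathcal{A}$-BAP rather than for a fixed $\lambda$.
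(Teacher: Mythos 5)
Your proposal is correct and follows exactly the route the paper intends: the paper states this proposition with no separate proof, deriving it from the isomorphism $X \cong M \oplus_\infty X/M$ together with Proposition \ref{prop:sum}, and your conjugation argument (using axiom (iii) of Definition \ref{defn:Lip-ideal} to get $\Phi\, g_\alpha\, \Phi^{-1} \in \textup{Lip}_{\mathcal{A}}^{\lambda}(X,X)$ with $\lambda = \|\Phi\|\,\|\Phi^{-1}\|\,\mu$) simply makes explicit the isomorphic invariance that the paper leaves implicit.
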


Now, we move on to the case when a subspace is locally complemented in the whole space. A subspace $M$ of a Banach space $X$ is \emph{locally complemented} (or $M$ is an \emph{ideal}) in $X$ if $M^\perp$ is the kernel of a norm-one projection on $X^*$, or equivalently, there exists a Hahn-Banach extension operator $\sigma : M^* \rightarrow X^*$ such that $\sigma (y^*) (y) = y^* (y)$ and $\| \sigma(y^*) \| = \|y^*\|$ for every $y^* \in M^*$ and $y \in M$. 

\begin{thm}\label{thm:ideal_1}
Let $X$ be a Banach space and $M$ be an ideal of $X$. Let $\mathcal{A}$ be an operator ideal such that $\mathcal{A}=\mathcal{A}^{d}$. If $X$ has the \textup{Lip}-$\mathcal{A}$-BAP, then 
\[
\beta_M^* \in \overline{\mathcal{A}^\lambda ({M}^*, {\mathfrak{F}(M)}^*)}^{w^*}
\]
for some $\lambda \geq 1$.
\end{thm}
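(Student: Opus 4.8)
The plan is to push the approximation of $\beta_X$ coming from Theorem \ref{thm:X-Lip-BAP} down to $M$ by exploiting the local complementation of $M$ in $X$. Since $X$ enjoys the $\textup{Lip}$-$\mathcal{A}$-$\lambda$-BAP for some $\lambda \geq 1$, item (c) of Theorem \ref{thm:X-Lip-BAP} supplies a net $(T_\alpha)$ in $\mathcal{A}^\lambda(\mathfrak{F}(X),X)$ with $T_\alpha \to \beta_X$ in $\tau_c$; in particular $T_\alpha \mu \to \beta_X \mu$ in norm for each fixed $\mu \in \mathfrak{F}(X)$. Since $M$ is an ideal in $X$, I would fix a Hahn-Banach extension operator $\sigma \colon M^* \to X^*$ with $\|\sigma\|=1$ and $\sigma(y^*)(m)=y^*(m)$ for all $m \in M$ and $y^* \in M^*$. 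Writing $J \colon M \hookrightarrow X$ for the inclusion and $\widehat{J}\colon \mathfrak{F}(M) \to \mathfrak{F}(X)$ for the associated linearization, recall that $\|\widehat{J}\| \le 1$ and that $\beta_X \widehat{J} = J\beta_M$, the latter because $L_J = \beta_X\widehat{J}$ agrees with $J\beta_M$ on $\delta_M(M)$ and hence everywhere by linearity and continuity.

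The candidate net is $S_\alpha := \widehat{J}^* T_\alpha^* \sigma \colon M^* \to \mathfrak{F}(M)^*$. First I would check that each $S_\alpha$ lies in $\mathcal{A}^\lambda(M^*, \mathfrak{F}(M)^*)$. This is precisely where the hypothesis $\mathcal{A}=\mathcal{A}^d$ enters: from $T_\alpha \in \mathcal{A}(\mathfrak{F}(X),X)$ we obtain $T_\alpha^* \in \mathcal{A}(X^*, \mathfrak{F}(X)^*)$, and then the ideal axioms give $S_\alpha = \widehat{J}^* T_\alpha^* \sigma \in \mathcal{A}(M^*,\mathfrak{F}(M)^*)$; the bound $\|S_\alpha\| \le \|\widehat{J}^*\|\,\|T_\alpha^*\|\,\|\sigma\| \le \lambda$ settles this point.

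It then remains to verify that $S_\alpha \to \beta_M^*$ in the weak-star topology on $\mathcal{L}(M^*, \mathfrak{F}(M)^*)=(M^*\widehat{\otimes}_\pi \mathfrak{F}(M))^*$. As $(S_\alpha)$ is norm-bounded, it suffices to test against elementary tensors $y^* \otimes \nu$ with $y^* \in M^*$ and $\nu \in \mathfrak{F}(M)$. Unwinding the three adjoints gives $\langle \nu, S_\alpha(y^*)\rangle = \sigma(y^*)(T_\alpha \widehat{J}\nu)$; passing to the limit via $T_\alpha\widehat{J}\nu \to \beta_X \widehat{J}\nu = J\beta_M\nu$ and then using that $\beta_M\nu \in M$ together with $\sigma(y^*)|_M = y^*$ yields $\langle \nu, S_\alpha(y^*)\rangle \to y^*(\beta_M\nu) = \langle \nu, \beta_M^* y^*\rangle$, which is exactly the required convergence.

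I expect the weak-star convergence to be the one step demanding care: $\tau_c$-convergence of $(T_\alpha)$ only yields pointwise convergence directly, so the argument genuinely relies on the density of elementary tensors in $M^* \widehat{\otimes}_\pi \mathfrak{F}(M)$ and on the uniform bound $\|S_\alpha\|\le\lambda$ to promote pointwise convergence to honest weak-star convergence. Everything else is formal: the only use of $\mathcal{A}=\mathcal{A}^d$ is in transposing $T_\alpha$ to $T_\alpha^*$, while the local complementation of $M$ enters solely through the norm-one extension operator $\sigma$, and the compatibility of the whole construction is guaranteed by the naturality identity $\beta_X \widehat{J} = J\beta_M$.
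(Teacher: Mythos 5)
Your proposal is correct and takes essentially the same approach as the paper's proof: the same net $\widehat{J}^{\,*} T_\alpha^* \sigma$ (the paper writes it as $\iota^* T_\alpha^* \sigma$ with $\iota \colon \mathfrak{F}(M) \hookrightarrow \mathfrak{F}(X)$), the same use of $\mathcal{A}=\mathcal{A}^d$ plus the ideal axioms to place it in $\mathcal{A}^\lambda(M^*,\mathfrak{F}(M)^*)$, and the same bounded-net-plus-density argument to upgrade pointwise convergence to weak-star convergence. The only cosmetic difference is that the paper verifies pointwise convergence on the elements $\delta_M(m)$ and extends by density of their span in $\mathfrak{F}(M)$, whereas you invoke the naturality identity $\beta_X \widehat{J} = J\beta_M$ to treat arbitrary $\nu \in \mathfrak{F}(M)$ directly.
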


\begin{proof}
By Theorem \ref{thm:X-Lip-BAP}, we know that $\beta_X \in \overline{ \mathcal{A}^\lambda (\mathfrak{F} (X), X) }^{\tau_c}$ for some $\lambda \geq 1$. Take a net $(T_\alpha)$ in $\mathcal{A}^\lambda (\mathfrak{F}(X),X)$ so that $(T_\alpha)$ converges to $\beta_X$ in the $\tau_c$-topology. Denoting by $\sigma$ the Hahn-Banach extension operator from $M^*$ to $X^*$, consider $\iota^* T_\alpha^* \sigma : M^* \rightarrow \mathfrak{F}(M)^*$, where $\iota : \mathfrak{F}(M) \hookrightarrow \mathfrak{F}(X)$ is the inclusion map. It is clear that $\iota^* T_\alpha^* \sigma$ belongs to $\mathcal{A}^\lambda ({M}^*, {\mathfrak{F}(M)}^*)$. 
Note that 
\begin{align*}
(\iota^* T_{\alpha}^* \sigma)(m^*)(\delta_M (m)) &= \sigma(m^*) (T_\alpha (\delta_X (m)) ) \\
&\rightarrow \sigma(m^*) (\beta_X (\delta_X (m))) = \sigma(m^*) (m) = m^* (m) 
\end{align*}
for all $m \in M$ and $m^* \in M^*$. As $\mathfrak{F}(M) = \overline{\operatorname{span}}\{\delta_x \colon x \in M\}$ and $\| \iota^* T_{\alpha}^* \sigma \| \leq \lambda$, we can deduce that the net $(\iota^* T_{\alpha}^* \sigma)$ satisfies that 
\[
\langle m^* \otimes \mu,  \iota^* T_{\alpha}^* \sigma \rangle \rightarrow \langle m^* \otimes \mu, \beta_M^* \rangle
\]
for all $m^* \in M^*$ and $\mu \in \frak{F}(M)$. Thus, a standard density argument shows that $(\iota^* T_{\alpha}^* \sigma)$ converges to $\beta_M^*$ in the weak-star topology. 
\end{proof}

Recall from \cite{CK} that the dual space $X^*$ is said to have the bounded weak-star density for compact operators (for short, B$\text{W}^*$D) if $\mathcal{K}^1 (X^*, X^*) \subseteq \overline{ \mathcal{K}_{w^*}^\lambda (X^*, X^*) }^{w^*}$ for some $\lambda \geq 1$, where $\mathcal{K}_{w^*}^\lambda (X^*, X^*)$ is the space of compact operators which are weak-star to weak-star continuous on $X^*$ with norm at least $\lambda$. It is known \cite[Proposition 2.7]{CK} that if $X^*$ is reflexive or has the BAP, then $X^*$ has the B$\text{W}^*$D while the converse is false. In the same paper, it is also observed \cite[Theorem 4.2]{CK} that for an ideal $M$ in a Banach space $X$, if $X$ has the $\mathcal{K}$-BAP and $M^*$ has the B$\text{W}^*$D, then $M$ also has the $\mathcal{K}$-BAP. 
The following shows that the same result can be obtained for an ideal $M$ with the isometric lifting property in $X$ when the $\mathcal{K}$-BAP assumption on $X$ is replaced by the \textup{Lip}-$\mathcal{K}$-BAP.

For Banach spaces $X$ and $Y$, recall that $(\mathcal{L} (X, Y), \tau_c)^*$ consists of all functionals $f$ of the form $f(T) = \sum_n y_n^* (Tx_n)$, where $(x_n) \subset X$, $(y_n^*) \subset Y^*$ and $\sum_n \|x_n\|\|y_n^*\| < \infty$.

\begin{prop}\label{prop:BWD}
Let $X$ be a Banach space and $M$ be an ideal in $X$ with the isometric lifting property. If $X$ has the \textup{Lip}-$\mathcal{K}$-BAP and $M^*$ has the B$\text{W}^*$D, then $M$ has the $\mathcal{K}$-BAP.  
\end{prop}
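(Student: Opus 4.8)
The plan is to deduce that $M$ has the $\mathcal{K}$-BAP by producing a net of \emph{conjugate} compact operators on $M^*$ that converges weak-star to $\Id_{M^*}$; passing to preadjoints and then to convex combinations will yield the desired $\tau_c$-approximation of $\Id_M$ by compact operators on $M$.

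First I would apply Theorem \ref{thm:ideal_1} with $\mathcal{A}=\mathcal{K}$. Since Schauder's theorem gives $\mathcal{K}=\mathcal{K}^d$, the hypothesis that $X$ has the \textup{Lip}-$\mathcal{K}$-BAP yields $\beta_M^* \in \overline{\mathcal{K}^\lambda(M^*,\mathfrak{F}(M)^*)}^{w^*}$ for some $\lambda \geq 1$, the relevant weak-star topology being that of $M^* \widehat{\otimes}_\pi \mathfrak{F}(M)$. I would then invoke the isometric lifting property of $M$: let $U \in \mathcal{L}(M,\mathfrak{F}(M))$ with $\|U\|=1$ and $\beta_M U = \Id_M$, so that $U^*\beta_M^* = \Id_{M^*}$. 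The post-composition map $S \mapsto U^*S$ from $\mathcal{L}(M^*,\mathfrak{F}(M)^*)$ to $\mathcal{L}(M^*,M^*)$ is precisely the adjoint of $\Id_{M^*}\otimes U : M^* \widehat{\otimes}_\pi M \to M^* \widehat{\otimes}_\pi \mathfrak{F}(M)$, hence weak-star to weak-star continuous, and it sends $\mathcal{K}^\lambda$ into $\mathcal{K}^\lambda$ by the ideal property. Applying it to the displayed membership gives $\Id_{M^*} = U^*\beta_M^* \in \overline{\mathcal{K}^\lambda(M^*,M^*)}^{w^*}$, now with respect to $M^* \widehat{\otimes}_\pi M$.

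The hard part, and the reason B$\text{W}^*$D is needed, is that the operators produced above are compact but \emph{not} conjugate operators: they arise through the Hahn--Banach extension operator $\sigma : M^* \to X^*$ built into Theorem \ref{thm:ideal_1}, which is not weak-star to weak-star continuous, so these operators have no preadjoints on $M$. I would repair this using the B$\text{W}^*$D of $M^*$: from $\mathcal{K}^1(M^*,M^*) \subseteq \overline{\mathcal{K}_{w^*}^{\lambda_0}(M^*,M^*)}^{w^*}$, scaling by $\lambda$ gives $\mathcal{K}^\lambda(M^*,M^*) \subseteq \overline{\mathcal{K}_{w^*}^{\lambda\lambda_0}(M^*,M^*)}^{w^*}$. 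Since the weak-star closure is idempotent, the membership from the previous step upgrades to
\[
\Id_{M^*} \in \overline{\mathcal{K}_{w^*}^{\lambda\lambda_0}(M^*,M^*)}^{w^*}.
\]

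Finally I would run the duality argument. Each element of $\mathcal{K}_{w^*}^{\lambda\lambda_0}(M^*,M^*)$ is $R^*$ for a unique $R \in \mathcal{K}^{\lambda\lambda_0}(M,M)$, so a net $(R_\beta^*)$ witnessing the last membership satisfies $m^*(R_\beta m) \to m^*(m)$ for all $m \in M$ and $m^* \in M^*$; that is, $R_\beta \to \Id_M$ in the weak operator topology. As $(R_\beta)$ is uniformly bounded by $\lambda\lambda_0$ and $\mathcal{K}^{\lambda\lambda_0}(M,M)$ is convex, passing to convex combinations places $\Id_M$ in the strong operator topology closure of $\mathcal{K}^{\lambda\lambda_0}(M,M)$, which for a bounded set coincides with its $\tau_c$-closure, exactly as in the proof of \textup{(d)} $\Rightarrow$ \textup{(c)} of Theorem \ref{thm:X-Lip-BAP}. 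Hence $\Id_M \in \overline{\mathcal{K}^{\lambda\lambda_0}(M,M)}^{\tau_c}$ and $M$ has the $\mathcal{K}$-BAP.
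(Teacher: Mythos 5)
Your proof is correct and takes essentially the same route as the paper's: apply Theorem \ref{thm:ideal_1}, compose with the adjoint of the lifting operator to get $\Id_{M^*} = U^*\beta_M^* \in \overline{\mathcal{K}^\lambda(M^*,M^*)}^{w^*}$, invoke the B$\text{W}^*$D of $M^*$ to replace these compact operators by weak-star continuous ones, and finish by passing to preadjoints and convex combinations to land in the $\tau_c$-closure. You merely make explicit a few details (the weak-star to weak-star continuity of $S \mapsto U^*S$ via the predual map, and the scaling/idempotency step for B$\text{W}^*$D) that the paper leaves implicit.
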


\begin{proof}
Thanks to Theorem \ref{thm:ideal_1}, we have that $\beta_M^* \in \overline{\mathcal{K}^\lambda (M^*,{ \mathfrak{F}(M)}^*)}^{w^*}$ for some $\lambda \geq 1$. Since $M$ has the isometric lifting property, there exists $S \in \Lin (M, \mathfrak{F}(M))$ such that $\|S\|=1$ and $\beta_M S = \Id_M$. Observe that 
\[
\Id_M^* = (\beta_M S)^* = S^* \beta_M^* \in \overline{ \mathcal{K}^\lambda (M^*, M^* ) }^{w^*}. 
\]
As $M^*$ has the B$\text{W}^*$D, we have that $\Id_M^* \in \overline{ \mathcal{K}_{w^*}^{\lambda'} (M^*, M^* ) }^{w^*}$ for some $\lambda' \geq 1$. Using the weak-star to weak-star continuity, we can assert that there exists a net $(T_\alpha)$ in $\mathcal{K}^{\lambda'} (M, M)$ such that $\phi (T_\alpha)$ converges to $\phi(\Id_M)$ for each $\phi \in (\Lin (M, M), \tau_c)^*$. By a convex combination argument, we conclude that $\Id_M$ belongs to $\overline{ \mathcal{K}^{\lambda'} (M, M) }^{\tau_c}$.
\end{proof}

On the contrary, we prove in the following theorem that $\beta_X^*$ lies in the weak-star closure of $\mathcal{A}^\lambda(X^*,\mathfrak{F}(X)^*)$ provided stronger assumptions on the ideal and again this is a bit weaker than $X$ having the $\textup{Lip}$-$\mathcal{A}$-BAP due to Theorem \ref{thm:X-Lip-BAP}. We will see later in Proposition \ref{prop:K-BWD} that the $\textup{Lip}$-$\mathcal{A}$-BAP may also be derived in some specific case.

\begin{thm}\label{thm:ideal_2}
Let $X$ be a Banach space and $M$ be a subspace of $X$ such that $\mathfrak{F}(M)$ is an ideal in $\mathfrak{F}(X)$. Suppose that $\mathcal{A}$ is an operator ideal such that $\mathcal{A}=\mathcal{A}^{d}$. If $M$ has the \textup{Lip}-$\mathcal{A}$-BAP and $\mathfrak{F}(X)/\mathfrak{F}(M)$ has the $\mathcal{A}$-BAP, then
\[
\beta_X^* \in \overline{\mathcal{A}^\lambda(X^*,{\mathfrak{F}(X)}^*)}^{w^*}
\]
for some $\lambda \geq 1$.  
\end{thm}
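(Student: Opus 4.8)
The plan is to split $\beta_X^*$ according to the ideal decomposition of $\mathfrak{F}(X)^*$ induced by $\mathfrak{F}(M)$, and to treat the two summands with the quotient hypothesis and the subspace hypothesis separately. Write $j:M\hookrightarrow X$ for the inclusion, $\iota:\mathfrak{F}(M)\hookrightarrow\mathfrak{F}(X)$ for the induced isometric inclusion, and $q:\mathfrak{F}(X)\to\mathfrak{F}(X)/\mathfrak{F}(M)$ for the quotient map; since $\beta_X\iota=j\beta_M$, dualizing gives $\iota^*\beta_X^*=\beta_M^* j^*$. As $\mathfrak{F}(M)$ is an ideal in $\mathfrak{F}(X)$, fix a Hahn--Banach extension operator $\Sigma:\mathfrak{F}(M)^*\to\mathfrak{F}(X)^*$ and set $P:=\Sigma\iota^*$, a norm-one projection on $\mathfrak{F}(X)^*$ with $\ker P=\mathfrak{F}(M)^\perp$; its adjoint $\Pi:=P^*$ is a weak-star continuous norm-one projection on $\mathfrak{F}(X)^{**}$ with range $\mathfrak{F}(M)^{\perp\perp}$. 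Putting $\rho:=(q^*)^{-1}(\Id-P):\mathfrak{F}(X)^*\to(\mathfrak{F}(X)/\mathfrak{F}(M))^*$ (legitimate because $q^*$ is an isometry onto $\mathfrak{F}(M)^\perp=\ker P$, which contains the range of $\Id-P$), I would decompose
\[
\beta_X^* = P\beta_X^* + (\Id-P)\beta_X^* = \Sigma\beta_M^* j^* + q^*\rho\beta_X^*,
\]
and show each summand lies in $\overline{\mathcal{A}^{\lambda_i}(X^*,\mathfrak{F}(X)^*)}^{\,w^*}$. Since addition is weak-star continuous and the relevant sets are norm-bounded, the sum $\beta_X^*$ then lies in $\overline{\mathcal{A}^{\lambda}(X^*,\mathfrak{F}(X)^*)}^{\,w^*}$ with $\lambda=\lambda_1+\lambda_2$.

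For the quotient summand I would use the $\mathcal{A}$-$\mu$-BAP of $\mathfrak{F}(X)/\mathfrak{F}(M)$ to pick a net $(V_\beta)$ in $\mathcal{A}^\mu(\mathfrak{F}(X)/\mathfrak{F}(M),\mathfrak{F}(X)/\mathfrak{F}(M))$ with $V_\beta\to\Id$ in $\tau_c$. Because $\mathcal{A}=\mathcal{A}^d$, each $V_\beta^*$ again belongs to $\mathcal{A}^\mu$, so $q^*V_\beta^*\rho\beta_X^*\in\mathcal{A}^{\lambda_2}(X^*,\mathfrak{F}(X)^*)$ for a uniform $\lambda_2$. For $\mu\in\mathfrak{F}(X)$ and $x^*\in X^*$ one has $(q^*V_\beta^*\rho\beta_X^* x^*)(\mu)=(\rho\beta_X^* x^*)(V_\beta q\mu)$, and since $V_\beta q\mu\to q\mu$ in norm this converges to $(q^*\rho\beta_X^* x^*)(\mu)$; exactly as in the proof of \textup{(c)}$\Rightarrow$\textup{(d)} of Theorem \ref{thm:X-Lip-BAP}, boundedness upgrades this to weak-star convergence, placing $q^*\rho\beta_X^*$ in the desired closure.

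The subspace summand $\Sigma\beta_M^* j^*$ is the crux, and the naive route fails. By Theorem \ref{thm:X-Lip-BAP}, the \textup{Lip}-$\mathcal{A}$-BAP of $M$ yields a net $(R_\gamma)$ in $\mathcal{A}^\lambda(\mathfrak{F}(M),M)$ with $R_\gamma\to\beta_M$ in $\tau_c$, hence $R_\gamma^*\to\beta_M^*$ weak-star; but $\Sigma R_\gamma^* j^*\to\Sigma\beta_M^* j^*$ weak-star would require transporting a weak-star convergent net through $\Sigma$ on the left, and $\Sigma$ is \emph{not} weak-star continuous. The remedy is to replace the single global extension $\Sigma$ by local ones coming from the ideal structure. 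Given a finite test set in $X^*\widehat{\otimes}_{\pi}\mathfrak{F}(X)$ — which, by density of $\operatorname{span}\{\delta_z\}$ and the uniform norm bound, may be taken of the form $\sum_i x_i^*\otimes\delta_{z_i}$ — and $\varepsilon>0$, local complementation of $\mathfrak{F}(M)$ in $\mathfrak{F}(X)$ furnishes a finite-rank operator $\tau:\mathfrak{F}(X)\to\mathfrak{F}(M)$ with $\|\tau\|\le1+\varepsilon$ whose adjoint approximates $\Pi$ weakly on the relevant finite sets, so that $\langle\iota\tau\delta_{z_i},\beta_X^* x_i^*\rangle$ is within $\varepsilon$ of $\langle\Pi\delta_{z_i},\beta_X^* x_i^*\rangle=(P\beta_X^* x_i^*)(\delta_{z_i})$. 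Now $\tau^*$ is a genuine adjoint, hence weak-star continuous, so $\tau^* R_\gamma^* j^*\to\tau^*\beta_M^* j^*$ weak-star; and using $\beta_X\iota=j\beta_M$ one computes $(\tau^*\beta_M^* j^* x_i^*)(\delta_{z_i})=x_i^*(\beta_X\iota\tau\delta_{z_i})=\langle\iota\tau\delta_{z_i},\beta_X^* x_i^*\rangle$, which by the choice of $\tau$ is within $\varepsilon$ of $(P\beta_X^* x_i^*)(\delta_{z_i})$. Each $\tau^* R_\gamma^* j^*$ lies in $\mathcal{A}^{\lambda_1}(X^*,\mathfrak{F}(X)^*)$ with $\lambda_1\approx\lambda$, and letting the finite test set increase and $\varepsilon\downarrow0$ produces a net witnessing $P\beta_X^*\in\overline{\mathcal{A}^{\lambda_1}(X^*,\mathfrak{F}(X)^*)}^{\,w^*}$.

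The main obstacle is exactly this last point: because the Hahn--Banach extension $\Sigma$ is not weak-star continuous, one cannot carry the weak-star approximation of $\beta_M$ into $\mathfrak{F}(X)^*$ directly, and this is precisely why the conclusion is only the weak-star statement about $\beta_X^*$ rather than the full \textup{Lip}-$\mathcal{A}$-BAP of $X$. Replacing $\Sigma$ by the finite-rank local projections $\tau$ supplied by the hypothesis that $\mathfrak{F}(M)$ is an ideal — in the spirit of the three-space liftings of Godefroy--Saphar \cite{GS} and Choi--Kim \cite{CK} — is what reconciles the subspace approximation with the weak-star topology, while the assumption $\mathcal{A}=\mathcal{A}^d$ is used throughout to pass freely between $V_\beta$, $R_\gamma$ and their adjoints without leaving $\mathcal{A}$.
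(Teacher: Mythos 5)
Your handling of the quotient summand $q^*\rho\beta_X^*$ is correct and is essentially the computation the paper itself performs, but the subspace summand --- which you rightly call the crux --- rests on a claim that is not only unproved but false. You assert that local complementation of $\mathfrak{F}(M)$ in $\mathfrak{F}(X)$ ``furnishes a finite-rank operator $\tau:\mathfrak{F}(X)\to\mathfrak{F}(M)$ with $\|\tau\|\le 1+\varepsilon$ whose adjoint approximates $\Pi$ weakly.'' Local complementation (even in its refined, functional-compatible Kalton/Oja--P\~oldvere form) produces operators $\tau:E\to\mathfrak{F}(M)$ defined only on \emph{finite-dimensional} subspaces $E\subseteq\mathfrak{F}(X)$; to obtain an operator on all of $\mathfrak{F}(X)$ one must compose with a projection onto $E$, whose norm is uncontrolled, and the uniform bound is indispensable for you twice: the composition $\tau^*R_\gamma^*j^*$ only makes sense (with codomain $\mathfrak{F}(X)^*$) when $\tau$ is globally defined, and the conclusion concerns the weak-star closure of the \emph{bounded} set $\mathcal{A}^{\lambda_1}(X^*,\mathfrak{F}(X)^*)$. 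Moreover, no improvement of local complementation can deliver your $\tau$'s: apply the claim to $M=X$, where $\mathfrak{F}(M)=\mathfrak{F}(X)$ is an ideal in itself with $\Sigma=\mathrm{Id}$ and $\Pi=\mathrm{Id}$ (and the hypotheses of the theorem hold trivially, e.g.\ for $\mathcal{A}=\mathcal{L}$). The claim then yields finite-rank $\tau$ on $\mathfrak{F}(X)$ with $\|\tau\|\le 1+\varepsilon$ and $x_i^*(\beta_X\tau\delta_{z_i})\approx x_i^*(z_i)$, i.e.\ $\beta_X$ lies in the weak-operator closure of the convex bounded set $\mathcal{F}^{1+\varepsilon}(\mathfrak{F}(X),X)$; since weak- and strong-operator closures of bounded convex sets coincide, and the strong-operator closure of a bounded set coincides with its $\tau_c$-closure, Theorem \ref{thm:X-Lip-BAP} together with \cite[Theorem 5.3]{GK} would force \emph{every} Banach space to have the $(1+\varepsilon)$-BAP for all $\varepsilon>0$, contradicting the existence of spaces failing the approximation property. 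So the local-to-global step is a genuine gap, not a technicality.

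The paper avoids this trap by never isolating the operator $P\beta_X^*=\Sigma\beta_M^*j^*$ at all. Instead of decomposing $\beta_X^*$ through the fixed projection $P$ and then approximating each piece, it pushes the Hahn--Banach extension through the approximants \emph{before} any limit is taken: with $g_\alpha\in\textup{Lip}_{\mathcal{A}}^{\lambda_1}(M,M)$ converging to $\Id_M$ in $\tau_c$, the operators $S_\alpha:=\sigma g_\alpha^t\iota^*$ form a bounded net in $\mathcal{A}^{\lambda_1}(X^*,\textup{Lip}_0(X))$ (here $\mathcal{A}=\mathcal{A}^d$ is used, exactly as you use it), and one takes a weak-star \emph{accumulation point} $S$ of this net, which lies in the weak-star closure of $\mathcal{A}^{\lambda_1}$ by construction --- no weak-star continuity of $\sigma$ is ever invoked, because $\sigma$ acts before the limit, not after. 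Crucially, $S$ need not equal $\Sigma\beta_M^*j^*$; the only property used is $(Sx^*)(m)=\lim_\alpha x^*(g_\alpha(m))=x^*(m)$ for $m\in M$, a weak-star-testable condition that survives passage to the accumulation point. Hence $R:=S-\beta_X^*$ maps into $\mathfrak{F}(M)^\perp$, and your (correct) quotient argument applies verbatim to $jR$, giving $\beta_X^*=S-jR$ in the desired closure. In short, your decomposition pins the subspace part down to one specific operator and thereby creates an impossible local-to-global problem; the paper's accumulation-point device keeps that part flexible, which is precisely what makes the proof work --- and also confirms your diagnosis of why only a weak-star statement about $\beta_X^*$ can be expected, even though your proposed remedy does not.
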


\begin{proof}
Take $(g_\alpha) \subset \textup{Lip}_\mathcal{A}^{\lambda_1} (M,M)$ so that $g_\alpha$ converges to $\Id_M$ in the $\tau_c$-topology for some $\lambda_1 \geq 1$. Let us consider $S_\alpha := \sigma g_\alpha^t \iota^* : X^* \to \textup{Lip}_0(X)$ where $g_\alpha^t \in \mathcal{L}(M^*,\textup{Lip}_0(M))$ is the Lipschitz transpose of $g_\alpha$ for each $\alpha$ (see \cite{JSV}), $\iota : M \to X$ is the canonical inclusion and $\sigma : \mathfrak{F}(M)^* \to \mathfrak{F}(X)^*$ the Hahn-Banach extension operator. Then 
\[
S_\alpha \in \mathcal{A}^{\lambda_1} (X^*,\mathfrak{F}(X)^*) = \mathcal{A}^{\lambda_1} (X^*,\textup{Lip}_0(X)).
\]
Let $S$ be a weak-star accumulation point of $(S_\alpha)$ in $\mathcal{L}(X^*,\textup{Lip}_0(X))$. Note that 
\begin{align*}
(S_\alpha x^*)(m) = \sigma(g_\alpha^t(\iota^* x^*))(m) = \sigma((\iota^*x^*g_\alpha))(m) = (\iota^*x^*g_\alpha)(m) \longrightarrow x^*(m)
\end{align*}
for every $m \in M$ and $x^* \in X^*$. Define $R(x^*) := Sx^*-x^*$ for every $x^* \in X^*$. Then $R$ maps from $X^*$ to $\mathfrak{F}(M)^\perp$, where it is known \cite[Lemma 2.27]{W} that 
$$
\mathfrak{F}(M)^\perp = \{ f \in \textup{Lip}_0(X) \colon f(m)=0 \text{ for every } m \in M\}.
$$
Let us denote by $j : \mathfrak{F}(M)^\perp \to \textup{Lip}_0(X)$ the canonical embedding. Note that $S-jR \in \mathcal{L}(X^*,\textup{Lip}_0(X))$. As a matter of fact,
$$
Sx^*-jRx^* = Sx^*-j(Sx^*-x^*)=x^*
$$
for every $x^* \in X^*$. It follows that $S-jR = \beta_X^*$. Recall that $S$ belongs to the weak-star closure of $\mathcal{A}^\lambda(X^*,\textup{Lip}_0(X))$.

We claim that $jR$ belongs to the weak-star closure of $\mathcal{A}^{\lambda_2} (X^*,\textup{Lip}_0(X))$ for some $\lambda_2 \geq 1$. From the assumption that $\mathfrak{F}(X)/\mathfrak{F}(M)$ has the $\mathcal{A}$-BAP, we can take 
\[
(Q_\beta) \subset \mathcal{A}^{\lambda_2} \left( \mathfrak{F}(X)/\mathfrak{F}(M), \mathfrak{F}(X) / \mathfrak{F}(M) \right)
\]
so that $Q_\beta$ converges to $\Id_{X/M}$ in the $\tau_c$-topology for some $\lambda_2 \geq 1$. Then $Q_\beta^* \in \mathcal{A}(\mathfrak{F}(M)^\perp,\mathfrak{F}(M)^\perp)$ and $jQ_\beta^*R \in \mathcal{A}^{\lambda_2} (X^*,\textup{Lip}_0(X))$. Observe that $jQ_\beta^*R \to jR$ in the weak-star topology in $\mathcal{L}(X^*,\textup{Lip}_0(X))$; hence the claim is established. It follows that $\beta_X^* = S-j R$ belongs to the weak-star closure of $\mathcal{A}^{\lambda_1 + \lambda_2} (X^*,\textup{Lip}_0(X))$. 
\end{proof}

\begin{rem}
It is known \cite[Theorem 2.3]{Sofi} that if a subspace $M$ of a Banach space $X$ satisfies that $\mathfrak{F}(M)$ is an ideal in $\mathfrak{F}(X)$, then $M$ is an ideal in $X$. Very recently, it has been discovered in \cite{Abe} that the converse is also true. 
\end{rem}

In the similar spirit as in Proposition \ref{prop:BWD}, we obtain the following result. 

\begin{prop}\label{prop:K-BWD}
Let $X$ be a Banach space and $M$ be a subspace of $X$ such that $\mathfrak{F}(M)$ is an ideal of $\mathfrak{F}(X)$. If $M$ has the \textup{Lip}-$\mathcal{K}$-BAP, $\mathfrak{F}(X)/\mathfrak{F}(M)$ has the $\mathcal{K}$-BAP and $X^*$ has the B$\text{W}^*$D, then $X$ has the \textup{Lip}-$\mathcal{K}$-BAP.
\end{prop}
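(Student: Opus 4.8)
The plan is to chain together Theorem~\ref{thm:ideal_2}, the definition of B$\text{W}^*$D, and the characterization in Theorem~\ref{thm:X-Lip-BAP}(d). First I would record that $\mathcal{K}=\mathcal{K}^{d}$: an operator is compact if and only if its adjoint is compact (Schauder's theorem), so the hypothesis $\mathcal{A}=\mathcal{A}^{d}$ of Theorem~\ref{thm:ideal_2} is satisfied for $\mathcal{A}=\mathcal{K}$. Since $\mathfrak{F}(M)$ is an ideal in $\mathfrak{F}(X)$, $M$ has the \textup{Lip}-$\mathcal{K}$-BAP, and $\mathfrak{F}(X)/\mathfrak{F}(M)$ has the $\mathcal{K}$-BAP, Theorem~\ref{thm:ideal_2} immediately yields
\[
\beta_X^* \in \overline{\mathcal{K}^{\lambda}(X^*,\mathfrak{F}(X)^*)}^{\,w^*}
\]
for some $\lambda\ge 1$; equivalently, $\iota_X=\beta_X^*$ is a weak-star limit of compact operators $X^*\to\textup{Lip}_0(X)$ of norm at most $\lambda$.

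The goal is then to upgrade these approximants to \emph{conjugate} operators, i.e. to prove that $\iota_X \in \overline{\mathcal{K}_c^{\lambda'}(X^*,\textup{Lip}_0(X))}^{\,w^*}$ for some $\lambda'\ge1$, after which Theorem~\ref{thm:X-Lip-BAP}, (d)$\Rightarrow$(a), gives that $X$ has the \textup{Lip}-$\mathcal{K}$-BAP and finishes the argument. This is where the hypothesis that $X^*$ has B$\text{W}^*$D enters: the operators produced in the first step are only known to be compact, not weak-star to weak-star continuous, and B$\text{W}^*$D is exactly the device---used in the same spirit as in Proposition~\ref{prop:BWD}---for trading general compact operators for weak-star continuous (hence conjugate) compact operators while keeping a uniform norm bound. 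Once the approximants are conjugate operators $T_\gamma^*$ with $T_\gamma\in\mathcal{K}^{\lambda'}(\mathfrak{F}(X),X)$, the weak-star convergence $T_\gamma^*\to\iota_X=\beta_X^*$ reads $T_\gamma\to\beta_X$ in the weak operator topology, and a convex-combination argument (as in the proofs of Theorems~\ref{thm:X-Lip-BAP} and \ref{thm:equiv3}) promotes this to $\tau_c$-convergence.

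The main obstacle is precisely this upgrading step, and the difficulty is structural: B$\text{W}^*$D is formulated for self-maps of $X^*$, whereas our operators act $X^*\to\textup{Lip}_0(X)$. In Proposition~\ref{prop:BWD} this mismatch was removed by the isometric lifting property of $M$, which supplies a weak-star continuous left inverse $S^*$ of $\beta_M^*$ and collapses $\beta_M^*$ into the self-map $\Id_{M}^*=S^*\beta_M^*$ of $M^*$, to which B$\text{W}^*$D applies verbatim. Here no lifting is assumed; although $X^*$ is $1$-complemented in $\textup{Lip}_0(X)$ and $\iota_X=\beta_X^*$ is itself weak-star continuous, the complementing projection $\textup{Lip}_0(X)\to X^*$ need not be weak-star continuous (equivalently, $X$ need not admit a bounded linear lifting $X\to\mathfrak{F}(X)$, reflecting the nonlinearity of $\delta_X$), so one cannot simply post-compose to reduce to self-maps of $X^*$ while preserving the weak-star limit. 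I therefore expect the real content to be a range-extended form of B$\text{W}^*$D---that a compact operator from $X^*$ into a dual space is a weak-star limit of weak-star continuous compact operators \emph{with a uniform norm bound}---applied to the weak-star continuous operator $\beta_X^*$. Retaining this uniform norm control (the ``bounded'' in bounded weak-star density) through the passage, rather than merely realizing the approximation pointwise via local reflexivity, is the crux; the remaining bookkeeping (tracking the constants $\lambda,\lambda'$ and the convex-combination passage from the weak operator topology to $\tau_c$) is routine.
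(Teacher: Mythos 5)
Your proposal follows the paper's proof skeleton exactly: Theorem~\ref{thm:ideal_2} applied with $\mathcal{A}=\mathcal{K}$ (legitimate, since $\mathcal{K}=\mathcal{K}^{d}$ by Schauder's theorem) gives $\beta_X^*\in\overline{\mathcal{K}^{\lambda'}(X^*,\mathfrak{F}(X)^*)}^{\,w^*}$; the B$\text{W}^*$D hypothesis is then supposed to convert these compact approximants into conjugates $T_\alpha^*$ with $T_\alpha\in\mathcal{K}(\mathfrak{F}(X),X)$; and Theorem~\ref{thm:X-Lip-BAP}, (d)$\Rightarrow$(a), concludes. The issue is that you never carry out the middle step: you observe (correctly) that B$\text{W}^*$D as defined concerns compact \emph{self-maps} of $X^*$, that the self-map reduction used in Proposition~\ref{prop:BWD} is unavailable here because no lifting of $X$ into $\mathfrak{F}(X)$ is assumed (equivalently, the Lindenstrauss projection $\textup{Lip}_0(X)\to X^*$ need not be weak-star continuous, so post-composition does not preserve the weak-star limit), and you then postulate a ``range-extended'' B$\text{W}^*$D---that compact operators from $X^*$ into the dual space $\mathfrak{F}(X)^*$ are boundedly weak-star approximable by weak-star continuous compact ones---without proving it or deriving it from the stated definition. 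Evaluated as a proof, that is a genuine gap: the property you invoke is not the property you are given. (A small additional slip: the extended property would be applied to the compact approximants $S_\gamma$ of $\beta_X^*$, not to $\beta_X^*$ itself, which is weak-star continuous but not compact.)

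You should know, however, that the paper's own proof disposes of this step in one sentence: having obtained $\beta_X^*\in\overline{\mathcal{K}^{\lambda'}(X^*,\mathfrak{F}(X)^*)}^{\,w^*}$, it asserts ``Since $X^*$ has the B$\text{W}^*$D, we can find a net $(T_\alpha)$ in $\mathcal{K}^{\lambda'}(\mathfrak{F}(X),X)$ so that $T_\alpha^*$ converges to $\beta_X^*$ in the weak-star topology''---that is, it applies B$\text{W}^*$D to operators $X^*\to\mathfrak{F}(X)^*$ in exactly the range-extended form you flagged (even reusing the constant $\lambda'$), without the reduction it carefully performs, via the isometric lifting property, in Proposition~\ref{prop:BWD}. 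So your structural diagnosis is sound and pinpoints the least-justified step of the published argument; but since you leave that step as a conjecture rather than closing it, your write-up is an accurate reconstruction of the paper's route with its crux left open, not a complete proof. To match the paper you would assert the step as the authors do; to improve on it you would need to prove the bounded range-extended approximation, which neither you nor the paper's text actually does.
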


\begin{proof}
As $\mathfrak{F} (X) / \mathfrak{F} (M)$ has the $\mathcal{K}$-BAP, arguing as in Theorem \ref{thm:ideal_2}, we can deduce that $\beta_X^*$ belongs to the weak-star closure of $\mathcal{K}^{\lambda'}(X^*,\mathfrak{F}(X)^*)$ for some $\lambda' \geq 1$. 
Since $X^*$ has the B$\text{W}^*$D, we can find a net $(T_\alpha)$ in $\mathcal{K}^{\lambda'}(\mathfrak{F} (X), X)$ so that $T_\alpha^*$ converges to $\beta_X^*$ in the weak-star topology. Now, Theorem \ref{thm:X-Lip-BAP} completes the proof.
\end{proof} 

\proof[Acknowledgements]

The authors would like to thank Yun Sung Choi, Ju Myung Kim and Abraham Rueda Zoca for fruitful conversations on the topic of the paper.
The authors would also like to thank the anonymous referees for helpful comments and for pointing out a mistake in the previous version of this paper.

\end{document}